\documentclass[oneside,english]{amsart}
\usepackage[T1]{fontenc}
\usepackage[latin9]{inputenc}
\usepackage{amsthm}
\usepackage[pdftex]{graphicx}

\makeatletter
\numberwithin{equation}{section}
\numberwithin{figure}{section}

\newtheorem{thm}{Theorem}[section]
\newtheorem{defi}{Definition}[section]

\newtheorem{lemma}{Lemma}[section]
\newtheorem{prop}{Proposition}[section]
\newtheorem{example}{Example}[section]

\newtheorem{remark}{Remark}[section]

\makeatother

\usepackage{babel}
\begin{document}

\title[\tiny{Hausdorff and box dimension of self-affine set in non-Archimedean field}]{Hausdorff and box dimension of self-affine set in locally compact non-Archimedean field}
\author{Yang Deng}
\address{Department of Mathematics, South China University of Technology, Wushan Road 381, Tiahe District, Guangzhou, 510641, China}
\email{3491374039@qq.com}
\author{Bing Li}
\address{Department of Mathematics, South China University of Technology, Wushan Road 381, Tiahe District, Guangzhou, 510641, China}
\email{scbingli@scut.edu.cn}
\author{Hua Qiu$^*$}
\address{Department of Mathematics, Nanjing University, Nanjing, 210093, P. R. China.}
\email{huaqiu@nju.edu.cn}

\subjclass[2000]{Primary 28A80.}

\keywords{non-Archimedean field, Hausdorff dimension, self-affine set, singular value decomposition}
\thanks{$^*$ Corresponding author. }
\thanks{The research of Qiu was supported by the National Natural Science Foundation of China, grant 12071213, and the Natural Science Foundation of Jiangsu Province in China, grant BK20211142.}

\date{}

\dedicatory{}
\begin{abstract}
In this paper we consider affine iterated function systems in locally compact non-Archimedean field $\mathbb{F}$. We establish the theory of singular value composition in $\mathbb{F}$ and compute box and Hausdorff dimension of self-affine set in $\mathbb{F}^n$, in generic sense, which is an analogy of Falconer's result for real case. The result has the advantage that no additional assumptions needed to be imposed on the norms of linear parts of affine transformation while such norms are strictly less than $\frac{1}{2}$ for real case, which benefits from the non-Archimedean metric on $\mathbb{F}$. 

\end{abstract}
\maketitle

 \section{Introduction}
For a self-affine set in $\mathbb{R}^n$, Falconer \cite{4} introduced singular value function of linear transformations in its associated iterated function system to determine its Hausdorff dimension. Precisely, given $M\geq 2$, let $T_1,T_2,\cdots,T_M$ be a set of contractive, non-singular linear transformations on
$\mathbb{R}^n$, and  $\textbf{b}=(b_1,b_2,\cdots,b_M)$ be a vector of $M$ points in $\mathbb{R}^n$. We denote by $K(\textbf{b})$ the self-affine  set associated with the affine function system $\{S_1,S_2,\cdots,S_M\}$ of the form $S_i(x)=T_i(x)+b_i$, i.e., $K(\textbf{b})$ is the unique non-empty compact set satisfying 
$$K(\textbf{b})=\bigcup_{i=1}^M S_i(K(\textbf{b})).$$
Providing that $\|T_i\|<\frac{1}{2}$ for each $i$, where $\|T_i\|$ is the operator norm of  $T_i$. Then, there is a number $d(T_1,T_2\cdots,T_M)$ depending on the linear
transformations $T_1,T_2,\cdots,T_M$, such that the invariant set
$K(\textbf{b})$  has Hausdorff dimension
$\min\{n,d(T_1,T_2,\cdots,T_M)\}$ for almost all
$\textbf{b}\in \mathbb{R}^{nM}$ in the sense of
$nM$-dimensional Lebesgue measure.

The critical number $d(T_1,T_2,\cdots,T_M)$ is given in terms of singular values of linear transformations, which is defined as follows. If $T$ is a non-singular linear transformation on $\mathbb{R}^n$, the singular values of $T$ are positive square roots of eigenvalues of $T^tT$, written as $\alpha_1\geq\alpha_2\geq\cdots\geq\alpha_n$, where $T^t$ is the transpose of $T$. They can be viewed as the length of the (mutually perpendicular) principle semiaxes of $T(B)$, where $B$ is the unit ball in $\mathbb{R}^n$. The singular value
function $\phi^s$ is defined by
$$\phi^s(T)=\alpha_1\alpha_2\cdots\alpha_{m-1}\alpha_{m}^{s-m+1},$$
for $0\leq s\leq n$, where $m$ is the smallest integer greater than
or equal to $s$, and
$$\phi^s(T)=(\alpha_1\alpha_2\cdots\alpha_n)^{\frac{s}{n}},$$ for $s>n$. It is easy to prove
that for each $s\geq 0$,  $\phi^s$ is submultiplicative, i.e.
$$\phi^s(TU)\leq\phi^s(T)\phi^s(U),$$ for any 
$T,U\in\mathcal{L}(\mathbb{R}^n,\mathbb{R}^n)$, where $\mathcal{L}(\mathbb{R}^n,\mathbb{R}^n)$ represents all linear transformations from $\mathbb{R}^n$ to $\mathbb{R}^n$.
The critical exponent
$d(T_1,T_2,\cdots,T_M)$ is defined to be the unique non-negative
solution $s$ to equation
\begin{equation}\label{1}
	P(\phi^s):=\lim_{k\rightarrow+\infty}\frac{1}{k}\log\left(\sum_{w_1,\cdots,w_k}\phi^s(T_{w_1}T_{w_2}\cdots T_{w_k})\right)=0, \end{equation}
where the sum is taken over all
finite words $(w_1w_2\cdots w_m)$ of length $m$ with $1\leq
w_i\leq M$, and the number $P(\phi^s)$ in (1.1) is called the topological pressure of $\phi^s$. The submultiplicativity of $\phi^s$ ensures convergence of limit in $(\ref{1})$. Moreover, as a function of $s$, the $P(\phi^s)$ is continuous and
strictly decreasing, and it is greater than $0$
when $s=0$ and is less than $0$ for some large $s$. Hence there exists a
unique $s$ for which the limit equals to $0$.

Falconer \cite{4} proved the dimension result indicated above under the assumption that $\|T_i\|<\frac{1}{3}$ for each $i$,  using potential-theoretic methods.  
Later, Solomyak \cite{10} pointed out that $\|T_i\|<\frac{1}{3}$ could be weakened
to $\|T_i\|<\frac{1}{2}$. The constant $\frac{1}{2}$ is proved to be sharp by Edgar in \cite{2}. Recently, the result was extended to self-affine sets in the sub-Riemannian metric setting of the Heisenberg group by Balogh and Tyson \cite{1}.

In this paper, we consider the Haussdorff dimension of self-affine set in non-Archimedean field, and let $\mathbb{F}$ be a non-Archimedean field, see the next section for its defintion and related properties. Consider the same type affine iterated function system $\{T_1x+b_1,T_2x+b_2,\cdots,T_Mx+b_M\}$
where $T_i$ are non-singular contractive linear transformations on $\mathbb{F}^n$ and
$b_i$ are points in $\mathbb{F}^n$, let $\mu$ be the Haar measure on $\mathbb{F}$, and $\mu^{nM}$ is the $nM$-dimensional product measure generated by $\mu$. Similar to the case on $\mathbb{R}^n$, we use $\textbf{b}$ to denote the vector $(b_1,b_2\cdots,b_M)\in \mathbb{F}^{nM}$ and $K(\textbf{b})$ to denote the associated self-affine set. We will define the similar  critical exponent $d(T_1,T_2,\cdots,T_M)$ (still denoted by $d(T_1,T_2,\cdots,T_M)$ without causing any confusion)  by introducing the analogous concepts of singular value composition and singular value function in non-Archimedean field, and then prove the following analog of the Falconer's
formula under additional condition that $\mathbb{F}$ is a locally compact field. Similarly, we use $\dim_H$ to denote the Hausdorff dimension and use $\dim_B$ to denote the box dimension.

\begin{thm}\label{thm1}
	Let $\mathbb{F}$ be a locally compact non-Archimedean field, $T_i$ be non-singular contractive linear transformations on $\mathbb{F}^n$ for any $1\leq i\leq M$, and $\textbf{b}=(b_1,b_2\cdots,b_M)\in \mathbb{F}^{nM}$. Then, we have\\
	\emph{(i)} $\dim_HK(\textbf{b})\leq \overline{\dim}_BK(\textbf{b})\leq d(T_1, T_2,\cdots,T_M)$ for all $\textbf{b}\in \mathbb{F}^{nM}$; \\
	\emph{(ii)} $\dim_HK(\textbf{b})=\dim_B K(\textbf{b})=\min\{n,d(T_1, T_2,\cdots,T_M)\}$ for $\mu^{nM}$-a.e. $\textbf{b}\in \mathbb{F}^{nM}.$
\end{thm}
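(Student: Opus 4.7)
The plan is to follow Falconer's two-step strategy, with the non-Archimedean setting producing cleaner estimates thanks to the ultrametric inequality, and in particular allowing the removal of the $\|T_i\|<\frac{1}{2}$ hypothesis.

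For part (i), I would use the canonical family of covers induced by the IFS. For each word $w=(w_1,\ldots,w_k)$, the cylinder $S_{w_1}\circ\cdots\circ S_{w_k}(K(\mathbf{b}))$ is the affine image of $K(\mathbf{b})$ under the composed linear map $T_w:=T_{w_1}\cdots T_{w_k}$. Using the singular value decomposition on $\mathbb{F}^n$ developed earlier in the paper, I can cover $T_w(B)$, where $B$ is a fixed ball containing $K(\mathbf{b})$, by a controlled number of non-Archimedean balls whose radii are dictated by the singular values $\alpha_1(T_w)\geq\cdots\geq\alpha_n(T_w)$. Summing $\phi^s$ over all words of length $k$ and letting $k\to\infty$, the pressure identity $P(\phi^s)=0$ at $s=d(T_1,\ldots,T_M)$ yields $\overline{\dim}_B K(\mathbf{b})\leq d$, and $\dim_H\leq\overline{\dim}_B$ is standard.

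For part (ii), fix $s<\min\{n,d\}$. I would choose a Bernoulli probability measure $\nu$ on the code space $\{1,\ldots,M\}^{\mathbb{N}}$ with weights tuned so that the integrated singular value function gives exponential decay with rate greater than $s$, which is possible because $P(\phi^s)>0$. Define the coding map $\pi_{\mathbf{b}}:\{1,\ldots,M\}^{\mathbb{N}}\to K(\mathbf{b})$ in the usual way and push $\nu$ forward to a measure $\mu_{\mathbf{b}}$ on $K(\mathbf{b})$. To obtain $\dim_H K(\mathbf{b})\geq s$ for $\mu^{nM}$-a.e.\ $\mathbf{b}$, I would form the $s$-energy $I_s(\mu_{\mathbf{b}})$ and integrate it against $\mu^{nM}$ in $\mathbf{b}$; Fubini reduces the problem to bounding the inner integral $\int|\pi_{\mathbf{b}}(\omega)-\pi_{\mathbf{b}}(\omega')|^{-s}\,d\mu^{nM}(\mathbf{b})$.

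The main obstacle is this inner $\mathbf{b}$-integral. If $\omega,\omega'$ share a common prefix $w=(w_1,\ldots,w_k)$ and first differ at position $k+1$, then $\pi_{\mathbf{b}}(\omega)-\pi_{\mathbf{b}}(\omega')=T_w\bigl(\pi_{\mathbf{b}}(\tau)-\pi_{\mathbf{b}}(\tau')\bigr)$ for the corresponding suffix codes $\tau,\tau'$, and the dependence of this difference on $\mathbf{b}$ is affine in the coordinates indexed by the suffix. Here the ultrametric inequality is decisive: in the real case one must fight transversality failures via delicate cancellations that force $\|T_i\|<\frac{1}{2}$, but in $\mathbb{F}^n$ distinct cylinders at the relevant scale are either equal or ultrametrically separated, so the required estimate
\[
\mu^{nM}\bigl\{\mathbf{b}:|\pi_{\mathbf{b}}(\omega)-\pi_{\mathbf{b}}(\omega')|\leq r\bigr\}\;\leq\;C\,\frac{r^s}{\phi^s(T_w)}
\]
should follow from a direct change of variables in the Haar measure with no smallness hypothesis on the $T_i$. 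Combining this with submultiplicativity of $\phi^s$, summability of $\sum_k\sum_{|w|=k}\phi^s(T_w)$ at the chosen $s$, and a standard Frostman-type conclusion gives $\dim_H K(\mathbf{b})\geq s$ for $\mu^{nM}$-a.e.\ $\mathbf{b}$; letting $s\nearrow\min\{n,d\}$ finishes part (ii).
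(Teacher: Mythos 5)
Your outline for part (i) matches the paper's proof: cover by cylinder images $S_w(B)$ for words $w$ chosen so that the $m$-th singular value of $T_w$ is comparable to a fixed scale $\epsilon$, decompose each cylinder image into roughly $\prod_{i<m}(\alpha_i/\alpha_m)$ balls of diameter $\alpha_m R$ using the singular value decomposition, and sum $\phi^s(T_w)$ using $P(\phi^s)<0$ for $s>d$.

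Your outline for part (ii) has the right global strategy (push a measure on code space forward, bound the $s$-energy by Fubini against $\mu^{nM}$, and use the ultrametric to get the $\mathbf{b}$-integral bound $\lesssim\phi^s(T_{\omega\wedge\omega'})^{-1}$ with no $\|T_i\|<\tfrac12$ hypothesis), and your observation that the ultrametric collapses the geometric series $\sum_k\eta^k$ to $\max_k\eta^k=\eta$ is exactly the reason the paper can drop the $\tfrac12$ bound in its analogue of Falconer's Lemma 3.1. However, there is a genuine gap in your choice of measure on the code space. You propose a Bernoulli measure with weights ``tuned so that the integrated singular value function gives exponential decay.'' The needed bound in the energy estimate is effectively a Frostman property $\nu([\omega])\leq c\,\phi^s(T_\omega)$ for \emph{all} finite words $\omega$, so that
\[
\sum_{\omega\in J}\frac{\nu([\omega])^2}{\phi^t(T_\omega)}\ \leq\ c\sum_{\omega\in J}\frac{\phi^s(T_\omega)}{\phi^t(T_\omega)}\,\nu([\omega])\ \leq\ c\sum_{k\geq1}a^{k(s-t)}\nu(J_\infty)<\infty.
\]
A Bernoulli measure gives $\nu([\omega])=p_{\omega_1}\cdots p_{\omega_k}$, which is \emph{multiplicative}, while $\phi^s$ is only \emph{submultiplicative}: $\phi^s(T_\omega)$ can be strictly smaller than $\prod_j\phi^s(T_{\omega_j})$ (the SVDs of the $T_i$ do not align, so products mix singular directions). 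Consequently there is, in general, no i.i.d.\ weight vector for which $\nu([\omega])\lesssim\phi^s(T_\omega)$ uniformly, and the energy bound can fail. The paper avoids this by invoking Rogers' net-measure lemma (Lemma 4.1 in the paper): since $\mathcal{M}^s(J_\infty)=\infty$ for $s<d$, one extracts a compact $E\subset J_\infty$ with $0<\mathcal{M}^s(E)<\infty$ and $\mathcal{M}^s(E\cap[\omega])\leq c_1\phi^s(T_\omega)$, and uses $\nu(\cdot)=\mathcal{M}^s(E\cap\cdot)$. That construction is precisely what supplies the Frostman bound adapted to a merely submultiplicative potential; a Gibbs/equilibrium state for the subadditive pressure would also work, but it is not a Bernoulli measure and requires a separate existence argument. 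Replacing your Bernoulli measure with the Rogers net-measure (or a Käenmäki-type equilibrium state) closes the gap and recovers the paper's proof.
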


In the non-Archimedean field $\mathbb{F}$, we still denote the operator norm of a linear transformations $A$ with $\|A\|$. Then, the results no longer need $\|T_i\|<\frac{1}{2}$ for any $i\leq M$. Actually, from the discreteness of $\|\cdot\|$ in $\mathbb{F}$, $\|T_i\|<1$ means $\|T_i\|\leq\frac{1}{2}$. In particular, when $q=2$, Theorem $\ref{thm1}$ still holds in the case that some $\|T_i\|$ may reach the critical value $\frac{1}{2}$, which is forbidden in the real numbers case.

\section{locally compact non-Archimedean field}
Let $\mathbb{F}$ be an infinite field in which a non-Archimedean valuation is defined for all $ x\in\mathbb{F}$, i.e. a map from $\mathbb{F}$ to the nonnegative real numbers such that, for any $x$, $y\in\mathbb{F}$\\
(i) $\|{x}\|=0$ if and only if $x=0$,\\
(ii) $\|{xy}\|=\|{x}\|\cdot$ $\|{y}\|$, \\
(iii) $\|{x+y}\|\le\max\{\|{x}\|,\|{y}\|\}$.\\
Denote 	$O=\{{a}\in\mathbb{F}:\|{a}\|\le1\},$ and 
$P=\{{a}\in\mathbb{F}:\|{a}\|<1\}.$
Then $O$ forms a ring under addition and multiplication as defined in the field $\mathbb{F}$. Moreover the subset $P$ forms maximal ideal in $O$ since outside $P$ there are only units. Clearly $O/P$ is a field, we call it the residue class field of the filed $\mathbb{F}.$

Moreover, if $\mathbb{F}$ is a locally compact non-Archimedean field, i.e. for any $x\in \mathbb{F}$, there exists a compact set $C$ containing a neighbourhood of $x$. We have following Lemma.
\begin{lemma}
	Let $(\mathbb{F}, \|\cdot\|)$ be a locally compact non-Archimedean field, and $d$ be the nature metric induced by $\|\cdot\|$, then\\
	$\emph{(i)} (\mathbb{F},d)$ is complete, and $O$ is compact.\\
	$\emph{(ii)} O/P$ is a finite set.\\
	$\emph{(iii)} P$ is a principal ideal, i.e. there exists $\pi\in P$ s.t. $P=\pi O$.
\end{lemma}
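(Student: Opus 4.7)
My plan is to establish (i) first via a scaling argument from a single compact ball, and then to deduce (ii) and (iii) from the resulting compactness of $O$ together with the ultrametric inequality. Local compactness yields a compact neighbourhood of $0$, and since $\{\{x : \|x\| \leq r\} : r > 0\}$ is a neighbourhood base at $0$, some closed ball $B_r = \{x : \|x\| \leq r\}$ is compact. The lemma tacitly requires the valuation to be non-trivial, since otherwise $\mathbb{F}$ is discrete and an infinite discrete space cannot have $O = \mathbb{F}$ compact. Choosing $a \in \mathbb{F}$ with $\|a\| > 1$, multiplication by $a^n$ is a homeomorphism sending $B_r$ onto $B_{\|a\|^n r}$, which contains $O$ for sufficiently large $n$. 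As $O$ is closed (being the preimage of $[0,1]$ under the continuous norm), $O$ sits as a closed subset of a compact set and is therefore compact. Completeness now follows: any Cauchy sequence is bounded, hence eventually lies in such a compact ball, so it has a convergent subsequence and must itself converge.

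For (ii), the set $P = \{x : \|x\| < 1\}$ is open, so every additive coset $a + P$ is open in $O$; these cosets form a disjoint open cover of $O$, and compactness forces this cover to be finite, whence $O/P$ is finite. For (iii), I would show that the supremum $r_0 := \sup\{\|x\| : x \in P \setminus \{0\}\} \leq 1$ is attained. Suppose not; one can then select $x_n \in P$ with $\|x_n\|$ strictly increasing to $r_0$. By compactness of $O$, a subsequence $(x_{n_k})$ converges, hence is Cauchy. But when $\|x\| \neq \|y\|$ the strict ultrametric identity $\|x - y\| = \max(\|x\|, \|y\|)$ holds, so for $n_k > n_j$ one has $\|x_{n_k} - x_{n_j}\| = \|x_{n_k}\| \to r_0 > 0$, contradicting Cauchy-ness. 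Let $\pi \in P$ realise $\|\pi\| = r_0$. Then $\pi O \subseteq P$ is obvious, and conversely any $y \in P$ has $\|y/\pi\| = \|y\|/\|\pi\| \leq 1$, so $y/\pi \in O$ and $y = \pi(y/\pi) \in \pi O$, giving $P = \pi O$.

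The core difficulty lies in (iii): one must rule out that the value group $\|\mathbb{F}^\times\|$ accumulates at $1$ from below. The key observation is that the ultrametric forces a strictly norm-increasing sequence to be automatically non-Cauchy, so compactness of $O$ immediately yields discreteness of $\|\mathbb{F}^\times\|$ and hence the existence of a uniformizer $\pi$.
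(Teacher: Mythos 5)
Your proposal is correct and complete. The paper itself does not prove Lemma 2.1 --- it simply refers the reader to Cassels \cite{14} --- so there is no ``paper's approach'' to compare against; what you have supplied is the standard textbook argument for local fields, and it fills a gap the paper leaves to a reference. A few remarks on the details: the scaling step ($a^n B_r = B_{\|a\|^n r}$, multiplication by $a^n$ being a homeomorphism) correctly upgrades one compact ball to compactness of $O$ and indeed of every closed ball, from which completeness follows; the disjoint-clopen-coset argument for (ii) is exactly right, using that $a+P\subseteq O$ for $a\in O$ by the ultrametric inequality; and for (iii) the observation that a strictly norm-increasing sequence in $P$ has $\|x_{n_k}-x_{n_j}\|=\max(\|x_{n_k}\|,\|x_{n_j}\|)\to r_0>0$, hence cannot be Cauchy, is precisely the point that forces the supremum $r_0$ to be attained. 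Your note that the statement tacitly assumes the valuation is non-trivial is a genuine and worthwhile observation: the paper's hypotheses as literally written (``infinite field with a non-Archimedean valuation'') would allow the trivial valuation, under which $\mathbb{F}$ is discrete, locally compact, and yet $O=\mathbb{F}$ is not compact, so part (i) would fail. One could also remark, for completeness, that non-triviality guarantees $r_0>0$ (take any $b$ with $0<\|b\|\neq 1$; either $b$ or $b^{-1}$ lies in $P\setminus\{0\}$), which your argument for (iii) implicitly uses when concluding the contradiction.
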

Denote ${\rm card}(O/P)=q$. 
Usually, we let $\|\pi\|=\frac{1}{q}$, it is a normalization of $\|\cdot\|$. At this point, we have following propostion.
\begin{prop}
	Let $\mu$ be the Haar measure on $\mathbb{F}$, here $\mathbb{F}$ is regarded as a compact group under "$+$", such that $\mu(O)=1$. Then, for any $A\in \mathcal{B}(\mathbb{F})$ and any $ c\in\mathbb{F}$, $\mu(cA)=\parallel{c}\parallel\cdot\mu(A)$, where $\mathcal{B}(\mathbb{F})$ is the Borel algebra in $\mathbb{F}$.
\end{prop}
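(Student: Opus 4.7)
The plan is to invoke uniqueness of Haar measure on the locally compact abelian group $(\mathbb{F},+)$. For fixed $c\in\mathbb{F}\setminus\{0\}$, I would define $\nu(A):=\mu(cA)$ on $\mathcal{B}(\mathbb{F})$. Since multiplication by $c$ is a homeomorphism of $\mathbb{F}$ and commutes with translation ($c(A+x)=cA+cx$), the set function $\nu$ is a translation-invariant Radon measure that is finite on compacts and positive on nonempty opens, hence a Haar measure. By uniqueness up to a positive scalar, there exists $k(c)>0$ with $\mu(cA)=k(c)\mu(A)$ for every Borel $A$, and the problem reduces to identifying $k(c)=\|c\|$. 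The case $c=0$ is trivial since $\{0\}$ has Haar measure zero on the non-discrete group $\mathbb{F}$.

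Next I would record two basic features of $k$. Multiplicativity is automatic from the definition: $k(cd)\mu(A)=\mu((cd)A)=\mu(c(dA))=k(c)k(d)\mu(A)$, so $k(cd)=k(c)k(d)$. To pin down the values of $k$, I would compute it on units and on a uniformizer. For any $u$ with $\|u\|=1$, both $u$ and $u^{-1}$ lie in $O$, so $uO=O$ and $k(u)=\mu(uO)/\mu(O)=1=\|u\|$. For the uniformizer $\pi$ supplied by Lemma~2.1(iii), note that $\pi O=P$, and by Lemma~2.1(ii) the residue field $O/P$ has cardinality $q$. Choosing coset representatives $a_1,\dots,a_q$ writes $O$ as the disjoint union $\bigsqcup_{i=1}^q(a_i+P)$, and translation invariance forces each translate to have the same measure, so $\mu(P)=1/q$. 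Consequently $k(\pi)=\mu(\pi O)=\mu(P)=1/q=\|\pi\|$.

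Finally I would extend the identity to arbitrary $c\in\mathbb{F}\setminus\{0\}$ using discreteness of the valuation: since $\|c\|\in q^{\mathbb{Z}}$, writing $\|c\|=q^{-n}$ gives $\|c\pi^{-n}\|=1$, hence $c=\pi^n u$ for some unit $u$. Multiplicativity then yields $k(c)=k(\pi)^n k(u)=q^{-n}\cdot 1=\|c\|$, which completes the argument. The only point deserving care is the very first step, where one must confirm that $\nu$ really is a nondegenerate Haar measure before invoking uniqueness; this is immediate from $c\neq 0$ making $x\mapsto cx$ a homeomorphism, so no serious obstacle arises. The structural ingredients from Lemma~2.1 — finiteness of $O/P$ and existence of a uniformizer — do essentially all of the work.
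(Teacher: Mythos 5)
Your proof is correct and is the standard argument. The paper does not reproduce a proof of Proposition 2.1 at all --- it simply cites Cassels \cite{14} --- and the route you take (uniqueness of Haar measure on the locally compact abelian group $(\mathbb{F},+)$ yields a modulus $k(c)$ with $\mu(cA)=k(c)\mu(A)$; multiplicativity of $k$; $k(u)=1$ on units since $uO=O$; $k(\pi)=\mu(P)=1/q=\|\pi\|$ from the partition of $O$ into the $q$ cosets of $P$; and finally $c=\pi^n u$ by discreteness of the value group) is precisely the argument one finds in that reference, so this is the same approach rather than a new one.
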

Propostion 2.1 and Lemma 2.1 are the basic results of algebra, which are proved in \cite{14}. 
Next, we will briefly give two classical examples of locally compact non-Archimedean field, see \cite{7,13} for more details.
\begin{example}
	Let $\mathbb{K}$ be a finite filed with q elements, and $\mathbb{K}((X^{-1}))$ be the field of formal Laurent series, i.e. 
	$$ \mathbb{K}((X^{-1}))=\left\{\sum_{n=n_0}^{+\infty}x_nX^{-n}:x_n\in\mathbb{K}\text{ and }n_0\in\mathbb{Z}\right\} .$$
	Denote $\deg(x)=-\inf\{n\in\mathbb{Z}:x_n\neq 0\}$, where $x=\sum_{n=n_0}^{+\infty}x_nX^{-n}\in\mathbb{K}((X^{-1}))$. In particular, $\deg(0)=-\infty$.
	Define the norm of $x$ to be $\|x\|=q^{\deg(x)}$, with $\|0\|=0$, then we have following:\\
	$\emph(1)~\|{x}\|=0$ if and only if $x=0$;\\
	$\emph(2)~\|{xy}\|=\|{x}\|\cdot\|{y}\|$;\\
	$\emph(3)~\|\alpha x+\beta y\|\leq \max(\|x\|,\|y\|)$ \emph{(for any $\alpha$, $\beta\in\mathbb{K}$)};\\
	$\emph(4)~ $ For any $\alpha, \beta\in\mathbb{K}$, $\alpha\neq 0$, $\beta\neq0$, if $\|x\|\neq\|y\|$, then $\|\alpha x+\beta y\|=\max(\|x\|,\|y\|)$.
	Notice, $P=\{x\in\mathbb{K}((X^{-1})):\|x\|<1\}$ is isomorphic to $\prod_{n\geq1}\mathbb{K}$, which is compact. Thus $\mathbb{K}((X^{-1}))$ is a locally compact non-Archimedean field.
	
\end{example}

\begin{example}
	Let $p$ be a prime number, and $\mathbb{Q}_p$ be the p-adic field. Then for any $x\in \mathbb{Q}_p$, we have $x=\sum_{n=n_0}^{+\infty}a_np^n$, where $a_n\in\{0,1,\cdots,p-1\}$. We denote $ord(x)=\inf\{n\in\mathbb{Z}:a_n\neq0\}$. In particular, $ord(0)=\infty$. And define the norm of $x$ to be $|x|_p=p^{-ord(x)}$, then $|\cdot|_p$ is a non-Archimedean norm. Same, we can get $P=\{x\in \mathbb{Q}_p:|x|_p<1\}$ is compact, moreover, $(\mathbb{Q}_p,|\cdot|_p)$ is a locally compact non-Archimedean field.
	
\end{example}

Now, let $k$ be an integer and $\mathbb{F}^{k}=\mathbb{F}\times\mathbb{F}\times\cdots\times\mathbb{F}$ be the $k$-dimensional vector space over $\mathbb{F}$. The norms on $\mathbb{F}^{k}$ is 
$\|\cdot\|:\mathbb{F}^{k}\to[0,+\infty)$ with $\|{x}\|=\max_{1\leq i\leq {k}}\|{x_i}\|$, where $x=(x_1,x_2,\cdots,x_{k})\in\mathbb{F}^{k}$, this norm is also a non-Archimedean one as $\|x+y\|\le\max\{\|x\|,\|y\|\}$ for any ${x,y}\in\mathbb{F}^{k}$. Without causing misunderstanding, we still let $d$ on $\mathbb{F}^{k}$  be the nature metric induced by $\parallel\cdot\parallel$. Then it is easy to check $(\mathbb{F}^{k},d)$ is complete and locally compact. Besides, let $m_1=\mu\times\mu\cdots\times\mu$ be the product measure on $\mathbb{F}^{k}$, and $m_2$ be the Haar measure on $\mathbb{F}^{k}$. Then for any set $A\in\mathcal{B}(\mathbb{F})\times\mathcal{B}(\mathbb{F})\cdots\times\mathcal{B}(\mathbb{F})$, both of $m_1$ and $m_2$ are satisfied $m_i(x+A)=m_i(A)$, $i=1,2$. Notice $\mathcal{B}(\mathbb{F})\times\mathcal{B}(\mathbb{F})\cdots\times\mathcal{B}(\mathbb{F})$ is a semialgebra, thus $m_1=m_2$ due to the measure extension theorem, we write $m_1=m_2=\mu^{k}$.

\section{Singular value decomposition on $\mathbb{F}^n$}

\subsection{Isometric transformations} Let $\mathbb{F}$ be a non-Archimedean field, recall that $\mathcal{L}(\mathbb{R}^n, \mathbb{R}^n)$ in $\mathbb{R}^n$, we  let  $\mathcal{L}(\mathbb{F}^n, \mathbb{F}^n)$ represent all linear transformations from $\mathbb{F}^n$ to $\mathbb{F}^n$. We choose the base of vector space $\mathbb{F}^n$ as the natural base, then the linear transformations can be regarded as a matrix. Denote $\|T\|=\sup_{\|x\|\neq 0}\frac{\|Tx\|}{\|x\|}$ as the norm of the transformation $T$.

\begin{prop}
For any ${T,A,B}\in \mathcal{L}(\mathbb{F}^n, \mathbb{F}^n)$, we have\\
	\emph{(i)} $\|T\|=\max\limits_{1\leq i,j\leq n}\|T_{ij}\|,$ where $T_{i,j}$ represents the element in row $i$ and column $j$ of $T$.\\
	\emph{(ii)}	$\|AB\|\leq\|A\|\cdot\|B\|$, $\|A+B\|\leq\max\{\|A\|,\|B\|\}$.
\end{prop}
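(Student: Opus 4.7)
The plan is to prove (i) by a two-sided inequality, then derive (ii) from (i) using the ultrametric property of the norm on $\mathbb{F}$.

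For part (i), I would first establish the upper bound $\|T\| \le \max_{i,j} \|T_{ij}\|$. Given any $x = (x_1,\ldots,x_n) \in \mathbb{F}^n$ with $x \neq 0$, the $i$-th coordinate of $Tx$ is $\sum_j T_{ij} x_j$. Applying the non-Archimedean triangle inequality and the multiplicativity of $\|\cdot\|$ on $\mathbb{F}$ gives
\[
\|(Tx)_i\| \le \max_j \|T_{ij}\|\cdot\|x_j\| \le \Bigl(\max_{i,j}\|T_{ij}\|\Bigr)\cdot\|x\|,
\]
and taking the maximum over $i$ and dividing by $\|x\|$ yields the claimed upper bound. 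For the lower bound, I would test $T$ on the natural basis vectors $e_1,\ldots,e_n$: since $\|e_j\|=1$ and $T e_j$ is the $j$-th column of $T$, we have $\|T\| \ge \|Te_j\| = \max_i \|T_{ij}\|$, and then taking the maximum over $j$ closes the gap. Combining the two bounds gives the equality in (i).

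For part (ii), once (i) is available, both claims follow from entrywise arguments. For submultiplicativity, the $(i,j)$-entry of $AB$ is $\sum_k A_{ik} B_{kj}$, and the ultrametric inequality combined with (i) gives
\[
\|(AB)_{ij}\| \le \max_k \|A_{ik}\|\cdot\|B_{kj}\| \le \|A\|\cdot\|B\|,
\]
and taking the maximum over $i,j$ using (i) again yields $\|AB\| \le \|A\|\cdot\|B\|$. For the additive bound, $\|(A+B)_{ij}\| \le \max\{\|A_{ij}\|,\|B_{ij}\|\} \le \max\{\|A\|,\|B\|\}$, so taking the maximum over $i,j$ yields $\|A+B\| \le \max\{\|A\|,\|B\|\}$.

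There is no serious obstacle here: the proof is essentially a bookkeeping exercise that exploits the strong triangle inequality in $\mathbb{F}$ to replace sums by maxima. The only subtle point worth flagging is that the lower bound in (i) relies on $\|e_j\|=1$, which is automatic for the sup-norm on $\mathbb{F}^n$ introduced in Section 2, so it fits the setting of the paper without additional work.
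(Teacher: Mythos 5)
Your proof is correct and follows essentially the same route as the paper: a two-sided estimate for (i) (ultrametric bound for the upper direction, evaluation on basis vectors for the lower), with (ii) deduced from (i) by entrywise use of the strong triangle inequality. If anything, your lower bound in (i) is stated more cleanly than the paper's, which momentarily conflates the row $e_{i_0}T$ with the column $Te_{i_0}$, and your spelled-out derivation of (ii) fills in what the paper leaves as an immediate consequence.
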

\begin{proof}
	(i) On the one hand, let $T_i$ be the $i-th$ row vector of $T$, for any ${x}\in\mathbb{F}^n$, we have $\|Tx\|\leq\max_i{\|T_i{x}\|}\leq(\max_{i,j}\|T_{i,j}\|)(\max_j\|x_j\|)=(\max_{1\leq i,j\leq n}\|T_{ij}\|)\|x\|.$ On the other hand, without loss of generality, 
	let $\max_{i,j}\|T_{ij}\|=\|T_{i_0,j_0}\|$, $e_{i_0}=(0,\cdots,1,\cdots,0)$, where 1 appears in position $i_0$. Then $$\|{e_{i_0}}\cdot{T}\|=\|(T_{{i_0},1},T_{{i_0},2},\cdot,T_{{i_0},n})\|=\|T_{i_0,j_0}\|.$$ Thus $\|T\|\geq\frac{\|Te_{i_0}\|}{\|e_{i_0}\|}=\|T_{i_0,j_0}\|$.\\
	By using trigonometric inequality and result of (i), we can directly get (ii).
\end{proof}

\begin{remark}
	Let $T\in\mathcal{L}(\mathbb{F}^m, \mathbb{F}^n)$ with $m\neq n$, we still
	denote $$\|T\|=\sup_{\|x\|\neq 0}\frac{\|Tx\|}{\|x\|},$$ where ${x}\in\mathbb{F}^n$. Under this situation, if $A$ and $B$ are matrices that can be added and multiplied, Proposition 3.1 still holds.
\end{remark}

\begin{prop}
Let $T, U$ be two $n\times n$ matrices in the filed $\mathbb{F}$. Then \\
\emph{(i)} $\det(T U)=\det T\det U.$\\
\emph{(ii)} $\det T\neq 0$ if and only if its inverse matrix $T^{-1}$ exists. Moreover, $T^{-1}=\frac{T^{*}}{\det T}$, where $T^{*}$ denotes the adjoint matrix of $T$.\\
\emph{(iii)} $\det T\neq 0$ if and only if the linear equation system $Tx=b$ has a unique solution $x=T^{-1}b$ for any ector $b$.\\
\emph{(iv)} $\|\det T\|\leq\|T\|^n$.
\end{prop}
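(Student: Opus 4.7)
The plan is to treat (i)--(iii) as standard linear algebra over an arbitrary field and to prove (iv) by a direct application of the Leibniz formula combined with the non-Archimedean structure provided by Proposition 3.1.

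For (i), I would invoke the standard proof via the Leibniz formula $\det T = \sum_{\sigma \in S_n} \mathrm{sgn}(\sigma)\prod_i T_{i,\sigma(i)}$ (or via alternating multilinear forms): this is a purely polynomial identity in the entries of $T$ and $U$, so it holds over any field, in particular over $\mathbb{F}$. For (ii), the cofactor identities $T T^{*} = T^{*} T = (\det T)\,I$ are also formal polynomial identities, hence valid over $\mathbb{F}$; combined with (i) they yield $T^{-1} = T^{*}/\det T$ whenever $\det T \neq 0$, and conversely if $T^{-1}$ exists then $\det T \cdot \det T^{-1} = \det I = 1$ forces $\det T \neq 0$. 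Statement (iii) is then immediate from (ii): if $\det T \neq 0$ then $x = T^{-1}b$ is the unique solution, and if $\det T = 0$ then $T$ is not invertible, so $Tx = b$ fails to have a unique solution.

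The substantive piece is (iv), where the non-Archimedean structure is used. Starting from the Leibniz formula, the strong triangle inequality $\|x+y\|\leq\max(\|x\|,\|y\|)$ applied iteratively over $S_n$, the multiplicativity $\|xy\|=\|x\|\cdot\|y\|$ applied to each product, the identity $\|\mathrm{sgn}(\sigma)\| = 1$ (which follows from $\|1\|=1$ and $\|-1\|^{2}=\|1\|=1$), and Proposition 3.1(i) (giving $\|T_{i,\sigma(i)}\|\leq \|T\|$) together yield
$$\|\det T\| \;\leq\; \max_{\sigma \in S_n}\prod_{i=1}^{n}\|T_{i,\sigma(i)}\| \;\leq\; \max_{\sigma \in S_n}\prod_{i=1}^{n}\|T\| \;=\; \|T\|^{n}.$$

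There is no serious obstacle. The only mild subtlety is confirming that cofactor/Cramer-type formulas transfer verbatim to a general field (including characteristic $2$, where $\mathrm{sgn}(\sigma)=1$ for all $\sigma$); since they are formal identities in polynomials in the entries, this is automatic. It is worth noting that the bound in (iv) is actually sharper than its real counterpart (where one would pick up a factor $n!$ from the triangle inequality, or need Hadamard's inequality to remove it); this tightening is a direct dividend of the non-Archimedean valuation and will be exploited later in the analysis of the singular value function.
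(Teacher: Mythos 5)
Your proposal is correct and follows essentially the same route as the paper, which simply notes that (i)--(iii) carry over to any field and that (iv) is immediate from the strong triangle inequality applied to the Leibniz expansion of $\det T$. Your write-up just fills in the details the paper leaves implicit (the polynomial-identity argument for (i)--(iii), the use of $\|\mathrm{sgn}(\sigma)\|=1$ and $\|T_{ij}\|\le\|T\|$ in (iv)), so there is no substantive difference.
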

\begin{proof}
	The first three statements are similar to the case of $\mathbb{R}^n$, which can be extended to a general field. The last statement is directly from the strong triangle 
	inequality.
\end{proof}

\begin{prop}
Let $T\in \mathcal{L}(\mathbb{F}^n, \mathbb{F}^n)$. Then $\|Tx\|=\|x\|$ for any  $ x\in \mathbb{F}^n$ if and only if $\|T\|=\|\det T\|=1.$
\end{prop}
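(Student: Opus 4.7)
The plan is to prove the two implications separately, with the converse direction resting on reduction modulo the maximal ideal $P$.

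For the forward direction, suppose $\|Tx\|=\|x\|$ for every $x$. Then $\|T\|=\sup_{x\neq 0}\|Tx\|/\|x\|=1$ is immediate. Being an isometry, $T$ is injective and hence bijective on the finite-dimensional space $\mathbb{F}^n$; moreover $T^{-1}$ is again an isometry, because writing $y=Tx$ gives $\|T^{-1}y\|=\|x\|=\|Tx\|=\|y\|$, so $\|T^{-1}\|=1$ as well. Applying Proposition 3.2(iv) to both $T$ and $T^{-1}$ yields $\|\det T\|\leq 1$ and $\|\det T^{-1}\|\leq 1$; since $\det T\cdot\det T^{-1}=\det I=1$ and the valuation is multiplicative, both inequalities must be equalities, giving $\|\det T\|=1$.

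For the converse, assume $\|T\|=\|\det T\|=1$. Proposition 3.1(i) shows every entry satisfies $\|T_{ij}\|\leq 1$, so $T$ restricts to a map $O^n\to O^n$ and descends to a linear map $\bar T$ on the finite-dimensional $(O/P)$-vector space $(O/P)^n$. Because the determinant is a polynomial in the matrix entries, $\det\bar T=\overline{\det T}$, and the hypothesis $\|\det T\|=1$ says exactly that $\det T$ is a unit in $O$, so $\overline{\det T}\neq 0$ and $\bar T$ is invertible over the field $O/P$. For an arbitrary nonzero $x\in\mathbb{F}^n$ with $\|x\|=q^{-k}$, Lemma 2.1(iii) together with the normalization $\|\pi\|=1/q$ lets us factor $x=\pi^k y$ with $\|y\|=1$, so $\bar y\neq 0$; injectivity of $\bar T$ then forces $\overline{Ty}=\bar T\bar y\neq 0$, i.e. $\|Ty\|=1$, and hence $\|Tx\|=\|\pi^k\|\cdot\|Ty\|=q^{-k}=\|x\|$.

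The principal obstacle is the converse direction, specifically the step that translates the analytic hypothesis $\|\det T\|=1$ into an algebraic invertibility statement on the residue-space map $\bar T$. This relies on the discreteness of the valuation from Lemma 2.1, which is what allows the identification of $\|\cdot\|=1$ with being a unit in $O$ and the factorization $x=\pi^k y$ with $\|y\|=1$. The forward implication is comparatively routine, following from Proposition 3.2(iv) and the multiplicativity of the determinant.
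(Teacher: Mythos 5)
Your proof is correct, and the forward direction is essentially identical to the paper's: both establish $\|T\|=1$, observe that $T$ is invertible with $T^{-1}$ also an isometry, and conclude $\|\det T\|=1$ from $\|\det T\|\le 1$, $\|\det T^{-1}\|\le 1$, and $\det T\cdot\det T^{-1}=1$.

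The converse direction is where you and the paper genuinely diverge. The paper's argument is elementary linear algebra over $\mathbb{F}$: from $\|T\|=1$ it gets $\|T^{*}\|\le 1$ (each entry of the adjugate is a determinant of a submatrix with entries of norm $\le 1$, hence has norm $\le 1$ by the ultrametric inequality), and since $\|\det T\|=1$ the formula $T^{-1}=T^{*}/\det T$ gives $\|T^{-1}\|\le 1$; combined with $\|T\|=1$ this pins down $\|Tx\|=\|x\|$. Your argument instead reduces modulo the maximal ideal: $\|T\|=1$ puts the entries in $O$, so $T$ descends to $\bar T$ on $(O/P)^n$; $\|\det T\|=1$ makes $\det T$ a unit, so $\bar T$ is invertible over the residue field; injectivity of $\bar T$ forces $\|Ty\|=1$ whenever $\|y\|=1$, and the case of general $x$ follows by scaling with $\pi^k$. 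Both proofs are valid here, but it is worth noting that the paper states Proposition~3.3 in the generality of an arbitrary non-Archimedean field (Section~3 does not yet assume local compactness), and the paper's adjugate argument works in that generality, whereas your reduction argument as written invokes Lemma~2.1(iii) and the uniformizer $\pi$, which presuppose a discrete valuation. Your approach can be salvaged in the general case by normalizing with $c=x_{i_0}$ where $\|x_{i_0}\|=\|x\|$ instead of with a power of $\pi$, but as stated it is narrower. Conceptually, your route makes the ``why'' more transparent: $\|T\|=\|\det T\|=1$ is precisely the condition that $T\in GL_n(O)$, i.e. that $T$ is invertible over the valuation ring, and such matrices obviously preserve $O^n$ and its filtration by $\pi^k O^n$. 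The paper's adjugate computation proves the same thing more directly without building the residue-field machinery.
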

\begin{proof}
	If $\|Tx\|=\|x\|$, for any $x\in\mathbb{F}^n$, then $\|T\|=1$ and $\|\det T\|\leq1=\|T\|^n$. Notice that $Tx=0$ has only one solution $x=0$, thus $\det T\neq0$, and $T^{-1}$ exists. Then for any $x\in\mathbb{F}^n$, we have $\|x\|=\|T\cdot T^{-1}x\|=\|T^{-1}x\|$, so $\|T^{-1}\|=1$ and $\|\det T^{-1}\|\leq1$, which imply $\|\det T\|=\|\det T^{-1}\|=1$. Conversely, we only need to show $\|x\|\leq\|Tx\|$, because $\|Tx\|\leq\|T\|\cdot\|x\|=\|x\|$. By $\|T\|=1$, we have $\|T^{*}\|\leq1$, moreover we get $\|T^{-1}\|=\|\frac{T^{*}}{\det T}\|=\|T^{*}\|\leq1.$ Then for any $x\in\mathbb{F}^n,$ $\|x\|=\|T^{-1}\cdot Tx\|\leq\|Tx\|\cdot\|T^{-1}\|=\|Tx\|.$
\end{proof}

\begin{remark}
	\emph{(i)} For convenience, we denote $$I_n=\{T\in\mathcal{L}(\mathbb{F}^n, \mathbb{F}^n): \|T\|=1=\|\det T\|\},$$ the set $I_n$ will play the role of the orthogonal transformations set in the case of  $\mathbb{R}^n$. We call elements in $I_n$ isometric transformations. For example, the matrix
$\left(\begin{array}{cc }
1-c & -c \\
1 & 1
\end{array}\right)\in I_2$, where $c\in F$ with $\|c\|<1$.\\
	\emph{(ii)} If $T\in I_n$, then $\|\det T\|=1>0$, i.e. $T^{-1}$ exists, thus $\|x\|=\|T\cdot T^{-1}x\|=\|T^{-1}x\|$, for any $x\in\mathbb{F}^n$, then $T^{-1}\in I_n$. \\
	\emph{(iii)} If $A\in I_n$ and $B\in I_n$, then $AB\in I_n$.\\
	\emph{(iv)} For any $P,Q\in I_n$, and any $D\in\mathcal{L}(\mathbb{F}^n, \mathbb{F}^n)$, we have $\|PDQ\|=\|D\|$.
\end{remark}

There are three types of elementary isometrics, which correspond to three types of row operations:\\
(i) Row(column)-multiplying matrices. Multiply all the elements in the i-th row of the identity matrix by $a$ and record that the matrix after multiplication is $T_i(a)$, where $a\in\mathbb{F}$ satisfies $\|a\|=1$.\\
(ii) Row(column)-switching matrices. Exchange the positions of rows i and j of the identity matrix, and record the obtained matrix as $T(i,j)$.\\
(iii) Row(column)-addition matrices. Multiply row $i$ of the identity matrix by $a$ and add it to row $j$, and record the resulting matrix as $T_{ij}(a)$, where $a\in \mathbb{F}$ with $\|a\|\leq 1$.\\
We denote the collection of such matrices by $EI_n(\mathbb{F})$(or just $EI_n$, without causing ambiguity), then, obviously, $EI_n\subset I_n$. More over, $T\in I_n\Leftrightarrow T=T_1\cdot T_2\cdots T_k$, $T_i\in EI_n$, $i\leq k,$ for some $k\in \mathbb{N}$, this is a direct corollary of the Theorem 3.1.

\subsection{Singular value decomposition}

In the following, we give a singular value decomposition(SVD) of matrix in $\mathbb{F}^n$ and obtain the corresponding singuar value, while Kedlaya \cite{25} gave another direct definition of singular value without introducing SVD.

\begin{thm}
Let $T\in\mathcal{L}(\mathbb{F}^n, \mathbb{F}^n)$ be a non-singular matrix, then there
exists a factorization of the form $$T=PDQ,$$ where $P, Q\in
I_n$, and $D=\mbox{diag}(\sigma_1,\sigma_2,\cdots,\sigma_n)$ is a
diagonal matrix, in which $\sigma_i\in \mathbb{F}$,  and $\|\sigma_1\|\geq \|\sigma_2\|\geq
\cdots\geq\|\sigma_n\|>0$. Moreover, the norm of each entry
of the diagonal of $D$ is uniquely determined by $T$.
\end{thm}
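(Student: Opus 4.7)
The plan is to prove existence by a non-Archimedean analog of Gaussian elimination with total pivoting, and then to deduce uniqueness from a sub-determinant characterisation of the products $\|\sigma_1\sigma_2\cdots\sigma_k\|$.

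For existence I would argue by induction on $n$. By Proposition 3.1(i) there is an entry $T_{i_0j_0}$ realising $\|T_{i_0j_0}\| = \|T\|$. Moving it to position $(1,1)$ via the row-switching matrix $T(1,i_0)$ from the left and the column-switching matrix $T(1,j_0)$ from the right (both in $EI_n$) produces a matrix $T'$ with $\|T'_{11}\| = \|T\|$ maximal. Because $\|T'_{11}\|$ is maximal, the multipliers $a_i := -T'_{i1}/T'_{11}$ and $b_j := -T'_{1j}/T'_{11}$ automatically satisfy $\|a_i\|, \|b_j\| \leq 1$, so the row-addition matrices $T_{1i}(a_i)$ and column-addition matrices $T_{j1}(b_j)$ all lie in $EI_n$; multiplying by them on the left and right clears row $1$ and column $1$ outside the pivot, yielding the block form $\mathrm{diag}(\sigma_1, T'')$ with $\sigma_1 := T'_{11}$. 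The crucial non-Archimedean observation is that $\|T''_{ij}\| = \|T'_{ij} - (T'_{i1}/T'_{11})T'_{1j}\| \leq \|T\|$ by the strong triangle inequality, so the subsequent pivots produced inductively have norm at most $\|\sigma_1\|$, giving the required ordering. Non-singularity of $T$ forces $T''$ to be non-singular, closing the induction. Since $I_n$ is closed under products and inverses by Remark 3.2(ii)(iii), collecting the elementary isometries on each side gives $T = PDQ$ with $P, Q \in I_n$.

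For uniqueness I would introduce, for $0 \leq k \leq n$,
\[
\Delta_k(T) := \max\bigl\{\|\det M\| : M \text{ is a } k\times k \text{ submatrix of } T\bigr\},
\]
with $\Delta_0(T) := 1$, and show that $\Delta_k(T) = \|\sigma_1\sigma_2\cdots\sigma_k\|$; this forces $\|\sigma_k\| = \Delta_k(T)/\Delta_{k-1}(T)$ and hence depends only on $T$. For a diagonal matrix $D$ with diagonal norms in non-increasing order, $\Delta_k(D) = \|\sigma_1\cdots\sigma_k\|$ is attained by the leading principal $k \times k$ minor, so it suffices to verify that $\Delta_k$ is invariant under left and right multiplication by $I_n$. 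Given $P \in I_n$, every entry of $P$ has norm $\leq 1$ by Proposition 3.1(i), so the permutation expansion together with the strong triangle inequality shows that every $k\times k$ minor of $P$ has norm at most $1$. Expanding a $k \times k$ minor of $PT$ by the Cauchy--Binet formula as a sum of products of $k\times k$ minors of $P$ and of $T$ then gives $\Delta_k(PT) \leq \Delta_k(T)$; applying the same estimate to $T = P^{-1}(PT)$ with $P^{-1} \in I_n$ (Remark 3.2(ii)) gives the reverse inequality, and the argument for right multiplication by $Q$ is identical.

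The main obstacle I anticipate is the uniqueness half: the existence step is essentially routine bookkeeping of elementary operations once one notices that the ultrametric inequality makes pivoting automatically norm-preserving, whereas proving that the $\|\sigma_k\|$ are intrinsic to $T$ requires identifying them with the invariants $\Delta_k(T)/\Delta_{k-1}(T)$. This in turn hinges on the mildly subtle non-Archimedean fact that every minor of a matrix in $I_n$ already has norm at most $1$; once that is established, Cauchy--Binet provides the invariance $\Delta_k(PTQ) = \Delta_k(T)$ immediately.
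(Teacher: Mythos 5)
Your existence argument is essentially the paper's: elimination with total pivoting, the ultrametric inequality guaranteeing that the multipliers lie in $O$ so that every elementary step stays in $EI_n$. You add the nice remark that the cleared block $T''$ satisfies $\|T''_{ij}\|\le\|T\|$, which makes the decreasing ordering of the $\|\sigma_i\|$ come out automatically from the recursion rather than requiring a final re-sorting by permutation matrices as in the paper; this is a harmless refinement.

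Your uniqueness argument, however, takes a genuinely different and arguably more transparent route. The paper works directly from $UD'=DV$ with $U=P^{-1}P'$, $V=QQ'^{-1}\in I_n$: it uses the permutation expansion of $\det U$ and the ultrametric inequality to extract a permutation $(j_1,\dots,j_n)$ with $\|U_{kj_k}\|=1$ for all $k$, deduces $\|\sigma'_{j_k}\|\le\|\sigma_k\|$ from the $(k,j_k)$-entry of $UD'=DV$, and forces equality via $\|\det T\|=\prod\|\sigma_k\|=\prod\|\sigma'_k\|$. You instead introduce the invariants $\Delta_k(T)=\max\{\|\det M\|:M\ \text{a } k\times k\ \text{submatrix of }T\}$, show $\Delta_k$ is invariant under left/right multiplication by $I_n$ via Cauchy--Binet (the paper's Lemma~3.3) together with the fact that every minor of an isometry has norm $\le 1$, and read off $\|\sigma_k\|=\Delta_k(T)/\Delta_{k-1}(T)$. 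What your approach buys: $\Delta_k(T)=\|\sigma_1\cdots\sigma_k\|$ is exactly the content of the paper's Lemma~3.4 (for integer $s$, $\phi^s(T)=\max_{T_s}\|\det T_s\|$), which the paper proves separately \emph{after} the SVD theorem. Your uniqueness proof therefore establishes that lemma as a byproduct, and it also makes clear that $\|\sigma_k\|$ is intrinsic to $T$ in a coordinate-free way without any case analysis on permutations; there is no circularity, since the invariance $\Delta_k(PTQ)=\Delta_k(T)$ needs only Cauchy--Binet and Proposition~3.1, not any prior form of uniqueness. The cost is that you must invoke Cauchy--Binet inside the SVD theorem rather than deferring it, but since the paper already proves that identity (Lemma~3.3) independently of Theorem~3.1, this is only a reordering of the exposition. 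Both proofs are correct.
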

\begin{proof}
	Since $T$ is non-singular, there exists $T_{ij}\neq
	0$ such that $\|T_{ij}\|=\|T\|$. By repeatedly multiplying $T$ by row(column)-switching matrices, the entry $T_{ij}$ can be moved to the 
	$(1,1)$ position. Hence without loss of generality we assume that
	$\|T_{11}\|=\|T\|\neq 0$. Then by adding proper multiples of the first row (or column) to the other rows (or columns), $T$
	can be converted into a matrix of the form $$\left(\begin{array}{cc
		}
		T_{11} & 0 \\
		0 & *
	\end{array}\right).$$ It is not hard to verify that the above
	elementary operations can be carried out by left (or right) multiplying $T$ by row(column)-addition matrices. Since $T$ is non-singular, there is at least one non-zero entry in $*$. Repeatedly proceeding this process, the  matrix $T$ will ultimately be converted into a diagonal form. Notice that in this process, we only multiply $T$ by elementary isometric matrices. Hence, exist $P,Q\in I_n$ such that $T$ has the form $T=PDQ,$ where $D$ is a non-singular diagonal matrix. Moreover, we could label  the entries in the diagonal of $D$  in decreasing order of its norm.
	
	Next we  prove the uniqueness. Assume $T=P'D'Q'$, where $P', Q'\in I_p(n)$ and $D'=\text{diag}(\sigma'_1,\sigma'_2,\cdots,\sigma'_n)$ is a diagonal matrix, in which $\sigma'_i\in \mathbb{F}$,  and $\|\sigma'_1\|\geq \|\sigma'_2\|\geq \cdots\geq \|\sigma'_n\|>0$. Next we show $\|\sigma_i\|=\|\sigma'_i\|$, for all $1\leq i\leq n.$
	
	Since $T=PDQ=P'D'Q',$ we get that $$P^{-1}P'D'=DQQ'^{-1}.$$ For convenience, write $U=P^{-}P'$, $V=QQ'^{-1}$, then $U, V\in I_n$ and $UD'=DV$. Since $$\det
	U=\sum_{j_1j_2\cdots j_n}\pm U_{1j_1}U_{2j_2}\cdots U_{nj_n},$$ where
	the summation is taken over all permutations of $(1,2\cdots,n)$, we 
	get that
	$$1=\|\det U\|\leq \max_{j_1j_2\cdots j_n}\|U_{1j_1}U_{2j_2}\cdots U_{nj_n}\|\leq \|U\|^n=1,$$ by using the strong triangle inequality.
	Hence there exists $(j_1,j_2,\cdots,j_n)$, such that
	$\|U_{1j_1}\|=\|U_{2j_2}\|\cdots =\|U_{nj_n}\|=1.$ Since $UD'=DV$,  for each $1\leq k\leq n$, $(UD')_{kj_k}=(DV)_{kj_k}$. This gives that
	$$\|U_{kj_k}\sigma'_{j_k}\|=\|\sigma'_{j_k}\|=\|V_{kj_k}\sigma_k\|\leq \|\sigma_k\|,$$ for any $1\leq k\leq n$, since $V\in I_n$.
	Hence,
	\begin{equation}\label{eq2}
		\|\sigma'_{j_k}\|\leq \|\sigma_k\|,\text{ for any }1\leq k\leq n.
	\end{equation}
	But since $P, Q, P', Q'\in I_p(n)$, we have
	$$\|\sigma_1\sigma_2\cdots\sigma_n\|=\|\sigma'_{j_1}\sigma'_{j_2}\cdots\sigma'_{j_n}\|=\|\det T\|.$$ So none of the inequality in $(\ref{eq2})$ can hold strictly. Combining with the assumption $$\|\sigma'_1\|\geq \|\sigma'_2\|\geq \cdots\geq \|\sigma'_n\|>0,$$ we get that $\|\sigma_i\|=\|\sigma'_i\|$, for all $1\leq i\leq n.$
\end{proof} 

\begin{remark}
	 Let $\alpha_i=\|\sigma_i\|$, $1\leq i\leq n$. Then $\alpha_1\geq \alpha_2\geq\cdots \geq\alpha_n>0$ are uniquely determined by $T$. We call them the \emph{singular values} of  $T$. Obviously, $\alpha_1=\|T\|=\|D\|$, moreover, if $T$ is contractive and non-singular, then we have $1>\|T\|=\alpha_1\geq\alpha_2\geq\cdots\geq\alpha_n>0$.\\

\end{remark}

\subsection{Singular value functions}
\begin{defi}
	Let $T\in\mathcal{L}(\mathbb{F}^n, \mathbb{F}^n)$ be a non-singular matrix, and $s\geq0$, $\lceil s\rceil$ means the minimum integer greater than or equal to $s$. The singular value function of $T$ is
	\begin{equation}
		\phi^s(T)=\left\{
		\begin{array}{rcl}
	\alpha_1\alpha_2\cdots\alpha_{m-1}\alpha_m^{s-m+1}&& {\text{if }0\leq s\leq n,~m=\lceil s\rceil}\\
		(\alpha_1\alpha_2\cdots\alpha_n)^{\frac{s}{n}}=\|\det T\|^{\frac{s}{n}}&&{\text{if }s>n.}	
		\end{array}\right.
	\end{equation}
\end{defi}

 It is clear that as a function of $s$, $\phi^s(T)$ is continuous and strictly decreasing, when $T$ is contractive. Next theorem gives the submultiplicativity of $\phi^s$.

\begin{thm}\label{prop6}
For  $s\geq 0,$ $\phi^s$ is submultiplicative, i.e.,
\begin{equation}\label{eq4}\phi^s(TU)\leq \phi^s(T)\phi^s(U)\end{equation} for any $T, U\in \mathcal{L}(\mathbb{F}^n, \mathbb{F}^n)$.
\end{thm}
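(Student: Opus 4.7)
The plan is to prove submultiplicativity through exterior powers, reducing everything to the submultiplicativity of the operator norm together with an alternative characterization of the products of the top singular values. For each $1\leq k\leq n$, let $\wedge^{k}T$ denote the $\binom{n}{k}\times\binom{n}{k}$ matrix whose entry at position $(I,J)$ (indexed by ordered $k$-subsets of $\{1,\ldots,n\}$) is the minor $\det T[I,J]$; the identity $\wedge^{k}(AB)=(\wedge^{k}A)(\wedge^{k}B)$ (Cauchy--Binet) is purely algebraic and holds over any field. The central claim I would isolate is
\[ \alpha_{1}(T)\alpha_{2}(T)\cdots\alpha_{k}(T)=\|\wedge^{k}T\|. \]

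To establish this, I would first verify that $P\in I_{n}$ implies $\wedge^{k}P\in I_{\binom{n}{k}}$. Every entry of $\wedge^{k}P$ is, by the Leibniz expansion, a sum of products of $k$ entries of $P$, each of norm at most $\|P\|=1$, so by the strong triangle inequality together with Proposition 3.1(i) one has $\|\wedge^{k}P\|\leq 1$. The identity $\det(\wedge^{k}P)=(\det P)^{\binom{n-1}{k-1}}$ gives $\|\det(\wedge^{k}P)\|=1$, and combining this with $\|\det(\wedge^{k}P)\|\leq\|\wedge^{k}P\|^{\binom{n}{k}}$ from Proposition 3.2(iv) forces $\|\wedge^{k}P\|=1$. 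Hence $\wedge^{k}P\in I_{\binom{n}{k}}$.

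Applying this to the SVD $T=PDQ$ from Theorem 3.1 and invoking Remark 3.2(iv) yields $\|\wedge^{k}T\|=\|\wedge^{k}D\|$. Since $D=\mbox{diag}(\sigma_{1},\ldots,\sigma_{n})$, the matrix $\wedge^{k}D$ is diagonal with entries $\sigma_{i_{1}}\sigma_{i_{2}}\cdots\sigma_{i_{k}}$ over ordered $k$-subsets, so Proposition 3.1(i) combined with the decreasing ordering of the $\alpha_{i}$ gives $\|\wedge^{k}D\|=\alpha_{1}\cdots\alpha_{k}$. Multiplicativity of $\wedge^{k}$ and submultiplicativity of the operator norm (Proposition 3.1(ii)) then yield, for every $1\leq k\leq n$,
\[ \alpha_{1}(TU)\cdots\alpha_{k}(TU)\leq\alpha_{1}(T)\cdots\alpha_{k}(T)\cdot\alpha_{1}(U)\cdots\alpha_{k}(U). \]

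To deduce (3.4), I would split into cases. For $s>n$, $\phi^{s}(T)=\|\det T\|^{s/n}$ and equality holds by multiplicativity of $\det$. For $0\leq s\leq n$ with $m=\lceil s\rceil$, I rewrite
\[ \phi^{s}(T)=(\alpha_{1}\cdots\alpha_{m-1})^{m-s}(\alpha_{1}\cdots\alpha_{m})^{s-m+1}, \]
with the convention that the empty product equals $1$; the two exponents are non-negative and sum to one, so multiplying the $k=m-1$ and $k=m$ inequalities raised to these respective powers delivers (3.4). The main obstacle is the identification $\alpha_{1}\cdots\alpha_{k}=\|\wedge^{k}T\|$, specifically verifying that $\wedge^{k}$ sends isometric matrices to isometric matrices; everything else is bookkeeping. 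The non-Archimedean setting actually makes the identity $\|\wedge^{k}D\|=\alpha_{1}\cdots\alpha_{k}$ automatic from Proposition 3.1(i), without the length-of-axes calculation needed in the real case.
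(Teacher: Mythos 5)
Your proof is correct, and its skeleton coincides with the paper's: Cauchy--Binet for minors (the paper's Lemma 3.1), the singular value decomposition of Theorem 3.1, Proposition 3.1(i), and the interpolation $\phi^s=(\phi^{m-1})^{m-s}(\phi^m)^{s-m+1}$ are all used on both sides. Where you genuinely diverge is in how you prove the key identity. You package the collection of $k\times k$ minors as the compound matrix $\wedge^k T$ and show that $\wedge^k$ carries $I_n$ into $I_{\binom{n}{k}}$, so that $\|\wedge^k T\|=\|\wedge^k(PDQ)\|=\|\wedge^k D\|=\alpha_1\cdots\alpha_k$ follows cleanly from Remark 3.2(iv); submultiplicativity of the operator norm then finishes the integer case in one line. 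The paper instead proves the equivalent statement $\phi^k(T)=\max_{T_k}\|\det T_k\|$ (its Lemma 3.2), where the inequality $\leq$ is direct but the reverse is handled by a contradiction argument: assuming strict inequality for every submatrix forces $\alpha_1\cdots\alpha_k<\alpha_1\cdots\alpha_k$. Your version avoids the contradiction at the cost of one extra algebraic fact, Sylvester--Franke $\det(\wedge^k P)=(\det P)^{\binom{n-1}{k-1}}$; note you could dispense with that too by observing $\wedge^k(P^{-1})=(\wedge^k P)^{-1}$ and appealing to Proposition 3.3 instead. Either way the exterior-power framing is more conceptual and makes it transparent why the non-Archimedean norm cooperates so well here, whereas the paper's route is more elementary in its prerequisites. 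Both are valid; yours is a worthwhile streamlining.
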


Before proving Theorem 3.2, we need the following two lemmas.	

\begin{lemma}\label{lemma1}
Given $1\leq t\leq n$, let $T$ and  $U$ be two $n\times n$ matrices whose
entries are taken from a field $G$. Then for each $t\times t$ submatrix
$(TU)^{j_1,j_2,\cdots, j_t}_{i_1,i_2,\cdots,i_t}$ of $TU$,
\begin{equation}\label{eq3}
\det ((TU)_{i_1,i_2,\cdots, i_t}^{j_1,j_2,\cdots,j_t})=\sum_{k_1,k_2\cdots,k_t}\det (T_{i_1,i_2,\cdots,i_t}^{k_1,k_2,\cdots,k_t})\cdot\det ( U_{k_1,k_2,\cdots,k_t}^{j_1,j_2,\cdots,j_t}),
\end{equation}
where the summation is taken over all possible index strings  $1\leq
k_1<k_2\cdots<k_t\leq n$. 
\end{lemma}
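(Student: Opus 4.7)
The plan is to recognize the identity as the classical Cauchy--Binet formula applied to a product of rectangular matrices. Writing $T'$ for the $t\times n$ matrix whose $\alpha$-th row is row $i_\alpha$ of $T$, and $U'$ for the $n\times t$ matrix whose $\beta$-th column is column $j_\beta$ of $U$, one has $(TU)^{j_1,\ldots,j_t}_{i_1,\ldots,i_t}=T'U'$, and the $t\times t$ minors appearing in $(\ref{eq3})$ are precisely the column-minors of $T'$ and the row-minors of $U'$. So the full claim reduces to the Cauchy--Binet identity
$$
\det(T'U')=\sum_{1\leq k_1<\cdots<k_t\leq n}\det\bigl((T')^{k_1,\ldots,k_t}\bigr)\,\det\bigl((U')_{k_1,\ldots,k_t}\bigr),
$$
which holds for any $t\times n$ and $n\times t$ matrices over any commutative ring, in particular over $\mathbb{F}$.

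To prove this identity I would apply the Leibniz expansion directly. Substituting $(T'U')_{\alpha,\beta}=\sum_{k=1}^{n}T'_{\alpha,k}U'_{k,\beta}$ into $\det(T'U')=\sum_{\sigma\in S_t}\mathrm{sgn}(\sigma)\prod_{\alpha=1}^{t}(T'U')_{\alpha,\sigma(\alpha)}$, distributing the product, and swapping summation orders, one obtains
$$
\det(T'U')=\sum_{k_1,\ldots,k_t=1}^{n}\Bigl(\prod_{\alpha=1}^{t}T'_{\alpha,k_\alpha}\Bigr)\sum_{\sigma\in S_t}\mathrm{sgn}(\sigma)\prod_{\alpha=1}^{t}U'_{k_\alpha,\sigma(\alpha)}.
$$
For each fixed tuple $(k_1,\ldots,k_t)$ the inner $\sigma$-sum is the determinant of the $t\times t$ matrix whose $\alpha$-th row is row $k_\alpha$ of $U'$; this vanishes whenever two of the $k_\alpha$'s coincide. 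So only tuples $k$ with pairwise distinct entries survive.

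Finally, I would collect the surviving contributions by grouping each injective tuple with the strictly increasing tuple obtained from it by sorting. For a sorted tuple $1\leq k_1<\cdots<k_t\leq n$, summing over its $t!$ reorderings and using the bijection $\sigma\mapsto\sigma\tau^{-1}$ on $S_t$ (where $\tau\in S_t$ is the sorting permutation) reassembles the product into $\det\bigl((T')^{k_1,\ldots,k_t}\bigr)\,\det\bigl((U')_{k_1,\ldots,k_t}\bigr)$ via the Leibniz expansion of each of those $t\times t$ determinants, the two factors of $\mathrm{sgn}(\tau)$ cancelling. This yields $(\ref{eq3})$. The only non-routine part of the argument is this last bookkeeping of signs under the sorting permutation; no feature of the non-Archimedean valuation enters, since $(\ref{eq3})$ is a purely algebraic identity in the entries of $T$ and $U$.
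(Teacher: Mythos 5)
Your proof is correct, and it takes a genuinely different route from the paper's. You reduce the claim to the rectangular Cauchy--Binet identity for $T'U'$, where $T'$ is the $t\times n$ block of selected rows of $T$ and $U'$ is the $n\times t$ block of selected columns of $U$, and then prove that identity directly by the Leibniz expansion: substitute $(T'U')_{\alpha\beta}=\sum_k T'_{\alpha k}U'_{k\beta}$, swap sums, observe that the inner $\sigma$-sum is an alternating function of the tuple $(k_1,\ldots,k_t)$ (hence vanishes on tuples with repeats), and then collect the injective tuples into strictly increasing ones, with the sorting permutation's sign cancelling between the $T'$-minor and the $U'$-minor. The paper instead proves the same identity by induction on $t$: the base case $t=1$ is ordinary matrix multiplication, and the inductive step expands $\det((TU)_{1,\ldots,t+1}^{1,\ldots,t+1})$ along its first row, applies the induction hypothesis to the resulting $t\times t$ cofactors, and then reassembles the inner sum over the row index $l_1$ into a $(t+1)\times(t+1)$ minor of $U$ via a cofactor-type identity, keeping track of the sign $(-1)^{1+\widetilde{l_1}}$ coming from the sorted position of $l_1$. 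Your argument is shorter and exposes the structure (multilinearity plus alternation) more transparently; the paper's inductive proof avoids any appeal to the symmetric group and stays entirely within cofactor expansion, which some readers may find more elementary. Both correctly note that the identity is purely algebraic over any commutative ring (the paper says ``field $G$''), with no role for the valuation.
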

\begin{proof}
	When $t=1$, $(\ref{eq3})$ is exactly the ordinary
	matrix multiplication.
	
	Suppose by induction that $(\ref{eq3})$ holds for $t$. We now prove
	$(\ref{eq3})$ for $t+1$.
	
	For convenience of marking, we assume that the $(t+1)\times
	(t+1)$ submatrix  in the left side of $(\ref{eq3})$ is
	$(TU)_{1,\cdots,t+1}^{1,\cdots,t+1}$. Then by expanding its
	determinant along the first row, we get
	$$
	\det ((TU)_{1,\cdots,t+1}^{1,\cdots,t+1})=\sum_{j=1}^{t+1}(-1)^{1+j}(TU)_{1,j}\det (TU)_{2,\cdots,t+1}^{1,\cdots,j-1,j+1,\cdots,t+1}.
	$$
	By using the induction assumption, it follows that
	\begin{eqnarray*}
		&&\det ((TU)_{1,\cdots,t+1}^{1,\cdots,t+1})\\
		&=&\sum_{j=1}^{t+1}\left((-1)^{1+j}\sum_{l_1=1}^n T_{1l_1}U_{l_1j}\cdot\sum_{l_2,\cdots ,l_{t+1}}\det T_{2,\cdots,t+1}^{l_2,\cdots,l_{t+1}}\cdot\det U_{l_2,\cdots,l_{t+1}}^{1,\cdots,j-1,j+1,\cdots,t+1}\right)\\
		&=&\sum_{j=1}^{t+1}\left((-1)^{1+j}\sum_{\tiny\begin{array}{ccc} &1\leq l_1\leq n, \\&l_2,\cdots,l_{t+1}\end{array}}T_{1l_1}\det T_{2,\cdots,t+1}^{l_2,\cdots,l_{t+1}}U_{l_1j}\det U_{l_2,\cdots,l_{t+1}}^{1,\cdots,j-1,j+1,\cdots,t+1}\right)\\
		&=&\sum_{\tiny\begin{array}{ccc} &1\leq l_1\leq n,\\ &l_2,\cdots,l_{t+1}\end{array}}\left(T_{1l_1}\det T_{2,\cdots,t+1}^{l_2,\cdots,l_{t+1}}\sum_{j=1}^{t+1}(-1)^{1+j}U_{l_1j}\det U_{l_2,\cdots,l_{t+1}}^{1,\cdots,j-1,j+1,\cdots,t+1}\right).
	\end{eqnarray*}
	
	For fixed $1\leq l_1\leq n$, $1\leq l_2<\cdots<l_{t+1}\leq n$,
	consider the term $$\sum_{j=1}^{t+1}(-1)^{1+j}U_{l_1j}\det
	U_{l_2,\cdots,l_{t+1}}^{1,\cdots,j-1,j+1,\cdots,t+1}.$$ Notice that
	if $l_1\in\{l_k: 2\leq k\leq t+1\}$,  the above term is equal
	to $0$. Otherwise if $l_1\notin\{l_k: 2\leq k\leq t+1\}$, we have
	\begin{eqnarray*}&&\sum_{j=1}^{t+1}(-1)^{1+j}U_{l_1j}\det U_{l_2,\cdots,l_{t+1}}^{1,\cdots,j-1,j+1,\cdots,t+1}\\&=&(-1)^{1+\widetilde{l_1}}\sum_{j=1}^{t+1}(-1)^{\widetilde{l_1}+j}U_{l_1j}\det U_{l_2,\cdots,l_{t+1}}^{1,\cdots,j-1,j+1,\cdots,t+1}\\&=&(-1)^{1+\widetilde{l_1}}\det U^{1,\cdots,t+1}_{l_1\vee\{l_2,\cdots,l_{t+1}\}},\end{eqnarray*}
	where $l_1\vee\{l_2,\cdots,l_{t+1}\}$ denote the index string
	consisting of $l_1,l_2,\cdots,l_{t+1}$ in increasing order, and
	$\widetilde{l_1}=\#\{k: l_k\leq l_1\}$.

	Hence, we have \begin{eqnarray*}
		&&\det ((TU)_{1,\cdots,t+1}^{1,\cdots,t+1})\\
		&=&\sum_{\tiny\begin{array}{ccc}&l_1\vee\{l_2,\cdots,l_{t+1}\}, \\& l_1\notin\{ l_2,\cdots,l_{t+1}\}\end{array}}(-1)^{1+\widetilde{l_1}}T_{1l_1}\det T_{2,\cdots,t+1}^{l_2,\cdots,l_{t+1}}\cdot\det U^{1,\cdots,t+1}_{l_1\vee\{l_2,\cdots,l_{t+1}\}}\\
		&=&\sum_{k_1,\cdots,k_{t+1}}\left(\det U^{1,\cdots,t+1}_{k_1,\cdots,k_{t+1}}\cdot\sum_{\tiny\begin{array}{ccc} &(l_1\vee\{l_2,\cdots,l_{t+1}\})\\&=(k_1,\cdots,k_{t+1})\end{array}}(-1)^{1+\widetilde{l_1}}T_{1l_1}\det T_{2,\cdots,t+1}^{l_2,\cdots,l_{t+1}}\right)\\
		&=&\sum_{k_1,\cdots,k_{t+1}}\det T_{1,\cdots,t+1}^{k_1,\cdots,k_{t+1}}\cdot\det U^{1,\cdots,t+1}_{k_1,\cdots,k_{t+1}}.
	\end{eqnarray*}
	
	So the lemma is complete.
\end{proof}  

\begin{lemma}\label{lemma2}
Let $1\leq s\leq n$ be an integer. Then, for any non-singular $T\in\mathcal{L}(\mathbb{F}^n,\mathbb{F}^n)$, we have 
$$\phi^s(T)=\max_{T_s}\{\|\det T_s\|\},$$ where the  maximum is
taken over all $s\times s$ submatrices of $T$.

\end{lemma}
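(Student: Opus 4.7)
The plan is to combine the singular value decomposition from Theorem 3.1 with the Cauchy--Binet-type identity of Lemma \ref{lemma1}, applied in \emph{both} directions, so that one never has to confront potential cancellation among ultrametric sums.

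First I would write $T=PDQ$ via SVD, with $P,Q\in I_n$ and $D=\mathrm{diag}(\sigma_1,\ldots,\sigma_n)$, $\|\sigma_i\|=\alpha_i$. For any $s\times s$ submatrix $T_I^J$, applying Lemma \ref{lemma1} twice and using that the $s$-minors of the diagonal matrix $D$ vanish unless the row and column index sets coincide, one obtains
\[
\det(T_I^J)=\sum_K \det(P_I^K)\,\sigma_K\,\det(Q_K^J),
\]
where $\sigma_K=\prod_{k\in K}\sigma_k$ and $K$ ranges over $s$-element subsets of $\{1,\ldots,n\}$. Since $\|P\|=1$, every entry of $P$ has norm at most $1$, so the Leibniz expansion of a minor combined with the strong triangle inequality yields $\|\det P_I^K\|\le 1$, and likewise for $Q$. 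Hence
\[
\|\det T_I^J\|\le \max_K \prod_{k\in K}\alpha_k = \alpha_1\cdots\alpha_s=\phi^s(T),
\]
using $\alpha_1\ge\cdots\ge\alpha_n$. This establishes the upper bound $\max_{T_s}\|\det T_s\|\le \phi^s(T)$.

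For the reverse inequality I would exploit invertibility. By Remark 3.2(ii), $P^{-1},Q^{-1}\in I_n$, and $D=P^{-1}TQ^{-1}$; applying the very same Cauchy--Binet estimate to this identity yields, for every $I,J$,
\[
\|\det D_I^J\|\le \max_{I',J'}\|\det T_{I'}^{J'}\|.
\]
Choosing $I=J=\{1,\ldots,s\}$, the left-hand side equals $\|\sigma_1\cdots\sigma_s\|=\alpha_1\cdots\alpha_s=\phi^s(T)$, which gives $\phi^s(T)\le \max_{T_s}\|\det T_s\|$ and completes the proof.

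The main obstacle, in my view, is the lower bound. A direct attempt to locate an explicit $s\times s$ submatrix of $T=PDQ$ attaining norm $\alpha_1\cdots\alpha_s$ can, via Laplace expansion of $\det P$ and $\det Q$, find $I,J$ with $\|\det P_I^{K^\ast}\|=\|\det Q_{K^\ast}^J\|=1$ for $K^\ast=\{1,\ldots,s\}$; but one must then rule out ultrametric cancellation among the remaining terms in the Cauchy--Binet sum, which may attain the same norm $\alpha_1\cdots\alpha_s$ when singular values are tied. The symmetric trick of applying the upper bound instead to $D=P^{-1}TQ^{-1}$ sidesteps this subtlety entirely.
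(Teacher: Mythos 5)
Your proof is correct and follows essentially the same route as the paper: singular value decomposition plus the Cauchy--Binet identity of Lemma~\ref{lemma1} for the upper bound, and the same Cauchy--Binet estimate applied to $D=P^{-1}TQ^{-1}$ for the lower bound. The only cosmetic difference is that you phrase the lower bound as a direct inequality while the paper wraps the identical chain of estimates in a proof by contradiction.
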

\begin{proof}
	First, there are $P, Q\in I_n$, such that $T=PDQ$ with $D=\mbox{diag}(\sigma_1,\sigma_2,\cdots,\sigma_n)$ is a diagonal matrix, $\sigma_i\in \mathbb{F}$, and $\|\sigma_1\|\geq \|\sigma_2\|\geq \cdots\geq\|\sigma_n\|>0$.
	
	Let $T_s=T_{i_1,\cdots,i_s}^{j_1,\cdots,j_s}$ be any $s\times s$
	submatrix of $T$. Then by Lemma $\ref{lemma1}$, we have
	\begin{eqnarray*}
		\det T_s&=&\sum_{\tiny\begin{array}{ccc}&k_1,\cdots,k_s, \\& l_1,\cdots,\l_s\end{array}}\det P_{i_1,\cdots,i_s}^{k_1,\cdots,k_s}\cdot\det D_{k_1,\cdots,k_s}^{l_1,\cdots,l_s}\cdot\det Q_{l_1,\cdots,l_s}^{j_1,\cdots,j_s}\\
		&=&\sum_{k_1,\cdots,k_s}\det P_{i_1,\cdots,i_s}^{k_1,\cdots,k_s}\cdot\det D_{k_1,\cdots,k_s}^{k_1,\cdots,k_s}\cdot\det Q_{k_1,\cdots,k_s}^{j_1,\cdots,j_s}.
	\end{eqnarray*}
	So by the strong triangle inequality, we have $$\|\det T_s\|\leq
	\max_{k_1,\cdots,k_{s}}\|\det
	P_{i_1,\cdots,i_s}^{k_1,\cdots,k_s}\|\cdot\|\sigma_{k_1}\cdots\sigma_{k_s}\|\cdot\|\det
	Q_{k_1,\cdots,k_s}^{j_1,\dots,j_s}\|.$$ Since $P,Q\in I_n$, for each index string $\{k_1,\cdots,k_s\}$, $\|\det
	P_{i_1,\cdots,i_s}^{k_1,\cdots,k_s}\|\leq 1$ and $\|\det
	Q_{k_1,\cdots,k_s}^{j_1,\dots,j_s}\|\leq 1$. Hence $$\|\det
	T_s\|\leq
	\max_{k_1,\cdots,k_{s}}\|\sigma_{k_1}\cdots\sigma_{k_s}\|\leq
	\alpha_1\cdots\alpha_s= \phi^s(T).$$ By the arbitrariness of $T_s$,
	it follows that $\phi^s(T)\geq\max_{T_s}\{\|\det T_s\|\}$.
	
	Next we show that there must exist a submatrix $T_s$ such that $\phi^s(T)=\|\det T_s\|$. Otherwise, if $\phi^s(T)>\|\det T_s\|$ for every submatrix $T_s$. By
	$D=P^{-1}TQ^{-1}$, and $P^{-1}, Q^{-1}\in I_n$, we get
	\begin{eqnarray*}
		\sigma_1\cdots\sigma_s&=&\det((P^{-1}TQ^{-1})_{1,\cdots,s}^{1,\cdots,s})\\&=&\sum_{\tiny\begin{array}{ccc} &k_1,\cdots,k_s, \\&l_1,\cdots,l_s\end{array}}\det (P^{-1})_{1,\cdots,s}^{k_1,\cdots,k_s}\det T_{k_1,\cdots,k_s}^{l_1,\cdots,l_s}\det(Q^{-1})_{l_1,\cdots,l_s}^{1,\cdots,s}.
	\end{eqnarray*}
	Hence
	\begin{eqnarray*}
		\alpha_1\cdots\alpha_s&\leq&\max_{\tiny\begin{array}{ccc}&k_1,\cdots,k_s, \\&l_1,\cdots,l_s\end{array}}\|\det (P^{-1})_{1,\cdots,s}^{k_1,\cdots,k_s}\|\cdot \|\det T_{k_1,\cdots,k_s}^{l_1,\cdots,l_s}\|\cdot \|\det(Q^{-1})_{l_1,\cdots,l_s}^{1,\cdots,s}\|\\&\leq&\max_{\tiny\begin{array}{ccc}&k_1,\cdots,k_s, \\&l_1,\cdots,l_s\end{array}}\|\det T_{k_1,\cdots,k_s}^{l_1,\cdots,l_s}\|\\&<&\phi^s(T)=\alpha_1\cdots\alpha_s.
	\end{eqnarray*} That is, $\alpha_1\cdots\alpha_s<\alpha_1\cdots\alpha_s$, which is a contradiction. It follows that $\phi^s(T)=\max\limits_{T_s}\{\|\det T_s\|\}$.
\end{proof} 

\textit{Proof of Theorem \ref{prop6}.} 
If $1\leq s\leq n$ and $s$ is an
integer. Take any $s\times s$ submatrix $(TU)_s$ of $TU$. Without
loss of generality, we may choose
$(TU)_s=(TU)_{1,\cdots,s}^{1,\cdots,s}$. Then 
$$\det(TU)_{1,\cdots,s}^{1,\cdots,s}=\sum_{k_1,\cdots,k_s}\det T_{1,\cdots,s}^{k_1,\cdots,k_s}\cdot\det U_{k_1,\cdots,k_s}^{1,\cdots,s}.$$
Moreover, we get
$$\|\det(TU)_s\|\leq \max_{k_1,\cdots,k_s}\|\det T_{1,\cdots,s}^{k_1,\cdots,k_s}\|\cdot \|\det U_{k_1,\cdots,k_s}^{1,\cdots,s}\|\leq \phi^{s}(T)\phi^s(U).$$
Since $(TU)_s$ is an arbitrary $s\times s$ submatrix of $TU$, we
have
$$\max_{(TU)_s}\|\det (TU)_s\|\leq \phi^s(T)\phi^s(U),$$ i.e.$$\phi^s(TU)\leq \phi^s(T)\phi^s(U).$$ 
If $1\leq s\leq n$ and $s$ not an integer, let $m=\lceil s \rceil $, then
\begin{eqnarray*}
\phi^s(T)=\alpha_1\cdots\alpha_{m-1}\alpha_m^{s-m+1}&=&(\alpha_1\cdots\alpha_{m-1}\alpha_m)^{s-m+1}(\alpha_1\cdots\alpha_{m-1})^{m-s}\\&=&
(\phi^m(T))^{s-m+1}(\phi^{m-1}(T))^{m-s}.
\end{eqnarray*}
it follows that $$\phi^s(TU)=(\phi^m(TU))^{s-m+1}(\phi^{m-1}(TU))^{m-s}\leq\phi^s(T)\phi^s(U).$$
As for $s\geq n$, it is obvious that $\phi^s$ is multiplicative. \hfill $\Box$\\

\section{ self-affine sets in $\mathbb{F}^n$}
 
\subsection{Notations}
Firstly, we give some notations. Let $M\geq2$ be an integer. Denote $$J_{\infty}=\{1,2,\cdots,M\}^{\mathbb{N}},~J_r=\{(i_1,\cdots,i_r):  1\leq i_j\leq M,~ j\leq r\},$$ $J=\bigcup_{r\geq0}J_r$, and $J_0=\emptyset$. For any $w\in J$, $v\in J$ or $J_{\infty}$, denote the length of $w$ by $|w|$, and denote $$wv$$ to be the sequence obtained by juxtaposition of the terms of $w$ and $v$. If $v=ww'$, for some $w'\in J_{\infty}$, we write $$w<v.$$ If $w$ and $v\in J_{\infty}$, then $$w\wedge v$$ is the maximal sequence such that both $w\wedge v<w$
and $w\wedge v<v$. Let $$d(w,v)=2^{-|w\wedge v|},$$ for $w\neq v\in J_{\infty}$. Then $(J_{\infty},d)$ is a compact metric space. If $w=(i_1,i_2,\cdots,i_k)\in J_k,$ we denote  $$T_w=T_{i_1}\cdot T_{i_2}\cdots T_{i_k}.$$

\begin{prop}\label{p2}
	Let $\{T_1,T_2,\cdots,T_M\}$ be sequence of contractive non-singular linear transformations on $\mathbb{F}^n$.\\
	\emph{(i)} For every two finite words $w, w'\in J,$ $\phi^s(T_{ww'})\leq \phi^s(T_w)\phi^s(T_{w'}).$ \\
	\emph{(ii)} There exist $b\leq a<1$ s.t. $b^{s|w|}\leq\phi^s(T_w)\leq a^{s|w|}$, for any $w\in J$.
\end{prop}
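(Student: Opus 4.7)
The plan is to handle (i) as an immediate corollary of the submultiplicativity theorem just proved (Theorem 3.2), and then to treat (ii) in two halves: the upper bound follows from (i) once one understands $\phi^s$ on a single contraction, while the lower bound requires a little more work through the singular value decomposition.

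For (i), since $w=(i_1,\ldots,i_k)$ and $w'=(i_{k+1},\ldots,i_{k+\ell})$, by the very definition of $T_w$ we have $T_{ww'}=T_wT_{w'}$. Applying Theorem \ref{prop6} with $T:=T_w$ and $U:=T_{w'}$ gives $\phi^s(T_{ww'})\le\phi^s(T_w)\phi^s(T_{w'})$ directly.

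For the upper bound in (ii), set $a:=\max_{1\le i\le M}\|T_i\|$; since each $T_i$ is contractive, $a<1$. I would check that $\phi^s(T_i)\le a^s$ for every $i$ by splitting into the two cases of Definition 3.1: for $0\le s\le n$, all singular values of $T_i$ are bounded above by $\alpha_1(T_i)=\|T_i\|\le a$, so the product $\alpha_1\cdots\alpha_{m-1}\alpha_m^{s-m+1}\le a^s$; for $s>n$, use Proposition 3.2(iv), $\|\det T_i\|\le\|T_i\|^n\le a^n$, hence $\phi^s(T_i)=\|\det T_i\|^{s/n}\le a^s$. Iterating (i) along $w=(i_1,\ldots,i_k)$ then yields $\phi^s(T_w)\le\prod_{j=1}^k\phi^s(T_{i_j})\le a^{s|w|}$.

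The main obstacle is the lower bound, since unlike $\|\cdot\|$, the smallest singular value is not obviously submultiplicative in the desired direction. The key observation I would use is that the SVD furnished by Theorem 3.1 gives $\alpha_n(T)=1/\|T^{-1}\|$: writing $T=PDQ$ with $P,Q\in I_n$ and $D=\mathrm{diag}(\sigma_1,\ldots,\sigma_n)$, Remark 3.2(iv) shows $\|T^{-1}\|=\|D^{-1}\|=\max_j\|\sigma_j\|^{-1}=\|\sigma_n\|^{-1}=\alpha_n(T)^{-1}$. Setting $b:=\min_{1\le i\le M}\alpha_n(T_i)>0$ (finite because each $T_i$ is non-singular) and noting $T_w^{-1}=T_{i_k}^{-1}\cdots T_{i_1}^{-1}$, submultiplicativity of $\|\cdot\|$ (Proposition 3.1(ii)) gives $\|T_w^{-1}\|\le\prod_j\|T_{i_j}^{-1}\|\le b^{-|w|}$, so $\alpha_n(T_w)\ge b^{|w|}$, and consequently every singular value of $T_w$ satisfies $\alpha_j(T_w)\ge b^{|w|}$. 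A quick case split on $s$ then finishes: for $0\le s\le n$ with $m=\lceil s\rceil$,
\[
\phi^s(T_w)=\alpha_1(T_w)\cdots\alpha_{m-1}(T_w)\alpha_m(T_w)^{s-m+1}\ge b^{|w|(m-1)}b^{|w|(s-m+1)}=b^{s|w|},
\]
while for $s>n$ we use $\|\det T_w\|=\prod_j\alpha_j(T_w)\ge b^{n|w|}$ to obtain $\phi^s(T_w)=\|\det T_w\|^{s/n}\ge b^{s|w|}$. Finally, the required inequality $b\le a$ is automatic since $b=\min_i\alpha_n(T_i)\le\min_i\alpha_1(T_i)\le\max_i\|T_i\|=a<1$.
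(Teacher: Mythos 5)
Your proposal is correct, and both (i) and the upper bound in (ii) match the paper's argument essentially verbatim: the paper also takes $a$ and $b$ to be a uniform upper and lower bound on all singular values of the $T_i$ and gets $\phi^s(T_w)\le a^{s|w|}$ by iterating submultiplicativity. The only genuine variation is in the lower bound. The paper notes that the singular values of $T_i^{-1}$ are the reciprocals $1/\alpha_j^{(i)}$, so $\phi^s(T_i^{-1})\le(1/b)^s$, and then applies submultiplicativity of $\phi^s$ to the factorization $I=T_w\cdot T_{w_l}^{-1}\cdots T_{w_1}^{-1}$ to get $1=\phi^s(I)\le\phi^s(T_w)(1/b)^{s|w|}$. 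You instead observe via Remark 3.2(iv) that $\alpha_n(T)=1/\|T^{-1}\|$, use submultiplicativity of the operator norm to bound $\|T_w^{-1}\|\le b^{-|w|}$, conclude that every singular value of $T_w$ is at least $b^{|w|}$, and read off $\phi^s(T_w)\ge b^{s|w|}$ from the definition. Your route is marginally more elementary (it invokes only $\|\cdot\|$-submultiplicativity rather than $\phi^s$-submultiplicativity for the lower bound) and in fact establishes the slightly stronger pointwise fact $\alpha_j(T_w)\ge b^{|w|}$ for all $j$, but both proofs rest on the same core idea of passing to the inverses $T_i^{-1}$ and use the same constants $a,b$.
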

\textit{Proof.} By using submultiplicativity of $\phi^{s}$, we can get (i). Because $T_i$ is contractive, there exists $b\leq a<1$, s.t. 
$1>a\geq\alpha^{(i)}_1\geq\cdots\geq\alpha^{(i)}_n\geq b>0$, for any $i\leq M$, where $\alpha^{(i)}_j$ is the $j$-th singular value of $T_i$. Then, for any $w\in J$, it is obviously that $\phi^s(T_w)\leq a^{s|w|}$. Notice, $\frac{1}{\alpha^{(i)}_j}$ is the j-th singular value of $T^{-1}_i$, thus we have $$\frac{1}{b}\geq\frac{1}{\alpha^{(i)}_n}\geq\cdots\frac{1}{\alpha^{(i)}_1}\geq\frac{1}{a}>1.$$ It follows that  $\phi^s(T^{-1}_i)\leq |\frac{1}{b}|^s$, for any $i\leq M.$ Let $w=(w_1,\cdots,w_l)$, then $$1=\phi^s(I)\leq\phi^s(T_w)\phi^s(T^{-1}_{w_1})\cdots\phi^s(T^{-1}_{w_l})\leq(\frac{1}{b})^{s|w|}\phi^s(T_w).$$ Then we have (ii). \hfill $\Box$\\

\subsection{Self-affine set}
Write $\{S_i=T_i+b_i\}_{i\leq M}$ as an \textit{affine
	iterated function system}(\emph{AIFS}) on $\mathbb{F}^n$, and let $K(\textbf{b})$ be the associated invariant set(\textit{self-affine set}), where $\textbf{b}=(b_1,b_2,\cdots,b_M)\in \mathbb{F}^{nM}$. The existence of $K(\textbf{b})$ follows from the completeness of the metric space consisting of all compact subsets of $\mathbb{F}^n$ with the Hausdorff metric. Actually, we have classical results about invariant set existence in general metric space, see \cite{26} Theorem 2.5.3.
\begin{lemma}
Let $(X,d)$ be a complete metric space, $\mathcal{H}(X)=\{E: E$ is compact subset of $X\}$. If $d_h$ is Hausdorff metric in $X$,  then $(\mathcal{H}(X),d_h)$ is complete.
\end{lemma}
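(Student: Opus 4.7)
The plan is to prove completeness of $(\mathcal{H}(X), d_h)$ by the standard hyperspace construction: given a Cauchy sequence $(E_n) \subset \mathcal{H}(X)$, produce a candidate limit $E$, show $E \in \mathcal{H}(X)$, and show $d_h(E_n, E) \to 0$. The natural candidate is
\[
E = \left\{ x \in X : \text{there exist } x_n \in E_n \text{ with } x_n \to x \right\},
\]
or equivalently $E = \bigcap_{N \geq 1} \overline{\bigcup_{n \geq N} E_n}$.

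First I would pass to a subsequence, still called $(E_n)$, with $d_h(E_n, E_{n+1}) < 2^{-n}$; this is harmless since a Cauchy sequence converges iff some subsequence does. To see that $E$ is nonempty, I would pick any $x_1 \in E_1$ and inductively choose $x_{n+1} \in E_{n+1}$ with $d(x_n, x_{n+1}) < 2^{-n}$, which is possible from the definition of the Hausdorff metric. The resulting sequence is Cauchy in $X$, hence converges by completeness of $X$, and its limit lies in $E$. The same argument, applied starting from any $y \in E_N$ rather than from $x_1 \in E_1$, shows that for every $N$ and every $y \in E_N$ there is a point of $E$ within distance $\sum_{n \geq N} 2^{-n} = 2^{-N+1}$ of $y$; this will yield one half of the Hausdorff convergence.

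Next I would verify $E$ is closed (straightforward, as a countable intersection of closed sets) and totally bounded. For total boundedness, fix $\varepsilon > 0$ and choose $N$ with $d_h(E_n, E_m) < \varepsilon/3$ for all $n, m \geq N$. Since $E_N$ is compact it admits a finite $\varepsilon/3$-net $\{y_1, \dots, y_k\}$. For any $x \in E$, pick $x_n \in E_n$ with $x_n \to x$; for large $n \geq N$ we have $d(x_n, x) < \varepsilon/3$ and there exists $z \in E_N$ with $d(x_n, z) < \varepsilon/3$, so some $y_j$ satisfies $d(x, y_j) < \varepsilon$. Hence $E$ is totally bounded; being also closed in the complete space $X$, it is compact.

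Finally I would establish $d_h(E_n, E) \to 0$. One inclusion is already done above: every $y \in E_n$ lies within $2^{-n+1}$ of some point of $E$ (for the chosen subsequence), so $\sup_{y \in E_n} d(y, E) \to 0$. For the other inclusion, I would show $\sup_{x \in E} d(x, E_n) \to 0$: given $x \in E$ with approximants $x_k \in E_k \to x$, for $n$ large we have $d(x_n, x) < \varepsilon$, hence $d(x, E_n) < \varepsilon$. Combining both estimates gives $d_h(E_n, E) \to 0$, and returning to the original Cauchy sequence yields convergence of the whole sequence. The step I expect to require the most care is verifying total boundedness of $E$ cleanly, since the definition of $E$ is inherently asymptotic and one must chase the three $\varepsilon/3$'s through the Hausdorff distance bound and the compactness of $E_N$; the rest is routine.
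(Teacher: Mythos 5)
The paper does not prove this lemma; it simply cites it as Theorem 2.5.3 of Edgar's \emph{Measure, topology, and fractal geometry}. Your argument is a correct and standard hyperspace-completeness proof, essentially the one in that reference: pass to a rapidly Cauchy subsequence, take $E$ to be the set of limits of selections $x_n\in E_n$ (equivalently $\bigcap_{N}\overline{\bigcup_{n\geq N}E_n}$), verify that $E$ is nonempty, closed and totally bounded (hence compact, being closed in a complete space), and then show Hausdorff convergence in both directions.

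Two small points deserve tightening. First, your verification that $\sup_{x\in E}d(x,E_n)\to 0$ is written pointwise: for a fixed $x\in E$ you pick $x_k\in E_k$ tending to $x$ and then take $n$ ``large,'' but that threshold depends on $x$. To get the supremum bound you should invoke the Cauchy condition: choose $N$ with $d_h(E_k,E_m)<\varepsilon$ for all $k,m\geq N$; then for any $x\in E$ there is some $m\geq N$ and $y\in E_m$ with $d(y,x)<\varepsilon$, and for every $n\geq N$ there is $z\in E_n$ with $d(y,z)<\varepsilon$, giving $d(x,E_n)<2\varepsilon$ uniformly over $x\in E$ and $n\geq N$. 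Second, the equivalence of your two descriptions of $E$ (limits of selections versus the topological limsup) is not automatic for an arbitrary sequence of compacta, but does hold here because $(E_n)$ is Cauchy; it is worth a line of justification, or else just work with one description throughout. These are presentational fixes only; the overall plan, including the $\varepsilon/3$ total-boundedness argument you flagged as the delicate step, is sound.
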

So the only thing we need to verify is that $S(E)=\bigcup_{i\leq M}S_i(E)$ is contraction map on $(\mathcal{H}(\mathbb{F}^n),d_h)$. Actually, for any $A$, $B\in \mathcal{H}(\mathbb{F}^n)$, recall $d_h(A,B)=\max\{d(A,B),d(B,A)\}$, and $d(A,B)=\sup_{x\in A}d(x,B)$, then we have
\begin{align*}
	d_h(S(A),S(B))&=d_h(\bigcup_{i\leq M}S_i(A),\bigcup_{i\leq M}S_i(B))\\
	&\leq\max(\max_{1\leq i\leq M}d(S_i(A),S_i(B)),\max_{1\leq i\leq M}d(S_i(B),S_i(A)))\\
	&=\max(\max_{1\leq i\leq M}d(T_i(A),T_i(B)),\max_{1\leq i\leq M}d(T_i(B),T_i(A)))\\
	&\leq (\max_{1\leq i\leq M}\|T_i\|)\cdot d_h(A,B). 
\end{align*}
Thus, by the fixed point theorem, there is a unique $K(\textbf{b})\in\mathcal{H}(X)$ such that 
$S(K(\textbf{b}))=K(\textbf{b})$.
 
Moreover, if $\omega=\omega_1\omega_2\cdots\in J_{\infty}$, write
\begin{eqnarray*}
	x_\omega(\textbf{b})&=&\lim_{m\rightarrow+\infty}(T_{\omega_1}+b_{\omega_1})(T_{\omega_2}+b_{\omega_2})\cdots(T_{\omega_m}+b_{\omega_m})(0)\\
	&=&b_{\omega_1}+T_{\omega_1}b_{\omega_2}+T_{\omega_1}T_{\omega_2}b_{\omega_3}+\cdots,
\end{eqnarray*}
which is well-defined since all the $T_i$'s are contractive. Then, we have $\bigcup_{\omega\in J_{\infty}}x_\omega(\textbf{b})=K(\textbf{b})$.

\subsection{Hausdorff dimension}
For $s\geq 0$, the $s$-dimensional Hausdorff measure  
$\mathcal{H}_d^s$ is defined on subsets of $(X,d)$ by 
$$\mathcal{H}^s(A)=\lim_{\delta\rightarrow 0}\mathcal{H}_{\delta}^s(A),$$
with 
$$\mathcal{H}_{\delta}^s(A)=\text{inf}\left\{\sum_i(\text{diam} U_i)^s: A\subset \bigcup_i U_i,~ \text{diam} U_i<\delta\right\},$$
where $\{U_i\}$ are open subsets in $X$.
The
\textit{Hausdorff dimension} of $A\subset X$ is
$$\dim_H(A)=\inf\{s:\mathcal{H}^s(A)=0\}=\sup\{s:\mathcal{H}^s(A)=+\infty\}.$$
See details in \cite{3,5,8}.

\subsection{net measure on $J_{\infty}$}

	Denote $$[\omega]=\{\nu\in J_{\infty}:\omega<\nu\},$$ for any $\omega\in J$. For a finite set $A\subset J$, set $A$ is called  the $r$\text{-cover set of }$E(\subset J_{\infty})$ if $E\subset\bigcup_{w\in A}[w]$ and $|w|>r$ for any $w\in A$. Let $$\mathcal{M}^s_r(E)=\inf \{\sum_{w\in A}\phi^s(T_w):~A\text{ is r-cover set of }E\},$$ and $\mathcal{M}^s(E)=\lim_{r\rightarrow +\infty}\mathcal{M}^s_r(E).$ Then we have following results.
\begin{thm}\label{t1}
	 There exists a unique $s>0$, s.t. $\lim_{r\rightarrow +\infty}\frac{1}{r}\log(\sum_{w\in J_r}\phi^{s}(T_w))=0$, we denote the number $s$ as $d(T_1,\cdots,T_M)$. Moreover, we have the following two equivalent definitions:\\
	 \emph{(i)} $s=\inf\{s:\mathcal{M}^s(J_{\infty})=0\}=\sup\{s:\mathcal{M}^s(J_{\infty})=\infty\},$\\ \emph{(ii)} $s=\inf\{s:\sum_{w\in J}\phi^s(T_w)<\infty\}=\sup\{s:\sum_{w\in J}\phi^s(T_w)=\infty\}.$
\end{thm}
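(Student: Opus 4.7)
The plan is to establish existence and uniqueness of the critical value via a pressure function, then derive the two equivalent characterizations. Let $a_r^{(s)} := \sum_{w\in J_r}\phi^s(T_w)$. Submultiplicativity of $\phi^s$ (Proposition 4.1(i)) gives $a_{r+r'}^{(s)} \leq a_r^{(s)}a_{r'}^{(s)}$, so $\log a_r^{(s)}$ is subadditive, and Fekete's lemma yields $P(s) := \lim_{r\to\infty}\frac{1}{r}\log a_r^{(s)} = \inf_r \frac{1}{r}\log a_r^{(s)}$. Using Proposition 4.1(ii), for $s'>s$ one has $\phi^{s'}(T_w) \leq a^{|w|(s'-s)}\phi^s(T_w)$ with $a<1$, so $P(s') \leq P(s) + (s'-s)\log a$, making $P$ Lipschitz continuous and strictly decreasing. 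Together with $P(0) = \log M > 0$ and $P(s) \leq \log M + s\log a \to -\infty$, the intermediate value theorem produces a unique $s^* > 0$ with $P(s^*) = 0$; this is $d(T_1,\ldots,T_M)$.

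Statement (ii) follows readily: $\sum_{w\in J}\phi^s(T_w) = \sum_{r\geq 0}a_r^{(s)}$, and the Fekete-based estimates $a_r^{(s)} = e^{rP(s)+o(r)}$ together with the one-sided bound $a_r^{(s)} \geq e^{rP(s)}$ imply the series converges iff $P(s) < 0$ iff $s > s^*$; the critical exponent is exactly $s^*$.

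For statement (i), the upper bound is easy: $A = J_{r+1}$ is an $r$-cover, so $\mathcal{M}^s_r(J_\infty) \leq a_{r+1}^{(s)}$, and monotonicity of $r\mapsto\mathcal{M}^s_r$ forces $\mathcal{M}^s(J_\infty)=0$ when $s>s^*$. For the crux --- showing $\mathcal{M}^s(J_\infty) = \infty$ when $s<s^*$ --- I would use a mass-distribution argument. Set $\psi_k := \log a_k^{(s)} - kP(s) \geq 0$ (Fekete) and choose a running-maximum subsequence $N_j \to \infty$ with $\psi_{N_j} = \max_{k\leq N_j}\psi_k$; along $N_j$ one automatically has $\psi_{N_j - m} \leq \psi_{N_j}$ for all $m \leq N_j$. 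Let $\nu_N([w]) = \phi^s(T_w)/a_N^{(s)}$ for $|w| = N$. Submultiplicativity then gives, once $N_j \geq |w|$, the estimate $\nu_{N_j}([w]) \leq \phi^s(T_w)\, a_{N_j - |w|}^{(s)}/a_{N_j}^{(s)} = \phi^s(T_w)\, e^{-|w|P(s) + \psi_{N_j-|w|}-\psi_{N_j}} \leq \phi^s(T_w)\, e^{-|w|P(s)}$. Any weak-$*$ limit $\mu$ of $(\nu_{N_j})$ (using compactness of $J_\infty$ and the fact that cylinders are clopen in the ultrametric) satisfies the Frostman-type bound $\mu([w]) \leq e^{-|w|P(s)}\phi^s(T_w)$. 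Then for any $r$-cover $A$ (with $|w|\geq r+1$), $1 = \mu(J_\infty) \leq \sum_{w\in A}\mu([w]) \leq e^{-(r+1)P(s)}\sum_{w\in A}\phi^s(T_w)$, so $\mathcal{M}^s_r(J_\infty) \geq e^{(r+1)P(s)} \to \infty$ as $r \to \infty$ since $P(s)>0$.

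The chief technical subtlety is precisely the running-max construction: because $\phi^s$ is only submultiplicative (no quasi-multiplicativity in general), no uniform bound on $a_{N-m}^{(s)}/a_N^{(s)}$ holds along arbitrary $N$, and the Fekete excess $\psi_k$ may oscillate; restricting to the indices where $\psi$ realizes its running maximum is what converts Fekete's one-sided bound $\log a_k^{(s)} \geq kP(s)$ into the two-sided ratio control needed to produce the Frostman measure and thereby close the lower bound in (i).
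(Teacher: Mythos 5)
Your proposal supplies a self-contained argument where the paper simply defers to Falconer~\cite{4}, Proposition~4.1, so it is worth comparing carefully. The overall scheme---Fekete's lemma for the pressure $P(s)$, Lipschitz monotonicity from the singular-value bounds of Proposition~4.1(ii), the trivial cover $J_{r+1}$ for the upper bound, and a Frostman/mass-distribution argument for the lower bound $\mathcal{M}^s(J_\infty)=\infty$ when $P(s)>0$---is sound and leads to the correct conclusion. The observation that cylinders are clopen in $J_\infty$, so weak-$*$ limits preserve the cylinder estimate, is exactly what makes the Frostman step go through.

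There is, however, one genuine gap, precisely at the step you yourself flag as the ``chief technical subtlety.'' You assert that the sequence of running-maximum indices $N$ satisfying $\psi_N=\max_{0\le k\le N}\psi_k$ contains a subsequence $N_j\to\infty$. For a general non-negative subadditive sequence $\psi$ with $\psi_k/k\to 0$, this can fail: take $\psi_1=1$ and $\psi_k$ increasing from $0.9$ toward $1$ without ever reaching it; this is subadditive, non-negative and $o(k)$, yet $N=1$ is the only running-maximum index. So the construction as stated does not obviously terminate. The fix is short and should be recorded: the set of running-maximum indices is infinite \emph{unless} the running maximum $\psi_N^\ast:=\max_{0\le k\le N}\psi_k$ is eventually constant, in which case $\psi$ is bounded, say by $C$, and then $\psi_{N-m}-\psi_N\le C$ for \emph{every} $N$. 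Either way one obtains, along some $N_j\to\infty$, the uniform bound
\[
\frac{a^{(s)}_{N_j-|w|}}{a^{(s)}_{N_j}}\le e^{C}\,e^{-|w|P(s)}
\]
for a fixed constant $C\ge 0$, which is all the Frostman bound $\mu([w])\le e^{C}e^{-|w|P(s)}\phi^s(T_w)$ requires, and the final estimate $\mathcal{M}^s_r(J_\infty)\ge e^{-C}e^{(r+1)P(s)}\to\infty$ still holds. With this dichotomy added, the proof is complete. Two small cosmetic points: you should say how $\nu_N$ is extended from cylinders of length $N$ to a Borel probability measure (e.g.\ by picking one point per cylinder, or spreading mass uniformly; it does not affect the argument), and you should note that covers in the definition of $\mathcal{M}^s_r$ may be assumed to be antichains, since dropping a word whose cylinder is contained in another only lowers the sum.
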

\begin{proof}
	The proof of the Theorem 4.1. is the same as Proposition 4.1 in \cite{4}, essentially all that we used was that the $\phi^s(T_i)$ are decreasing in s and its subadditivity, which we already have. Thus, the theorem still holds in $\mathbb{F}$.
\end{proof}  

Further, we have the following Lemma, see [9, Lemma 4.2]
\begin{lemma}\label{la1}
	If $\mathcal{M}^s(J_{\infty})=\infty$ for some $s$. Then there exists a compact set $E\subset J_{\infty}$ such that $0<\mathcal{M}^s(E)<\infty $ and a constant $c_1$ s.t. $$\mathcal{M}^s(E\cap [\omega])\leq c_1\phi^s(T_w) ~~(w\in J).$$
\end{lemma}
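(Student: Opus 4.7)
The plan is to adapt Falconer's construction from the real case (reference [9]) to the ultrametric symbolic space $(J_\infty, d)$, exploiting the fact that the cylinder sets $[w]$ play the role of balls exactly (via the ultrametric), and that $\phi^s$ is submultiplicative along the word concatenation (Proposition \ref{p2}). The strategy is a tree-pruning (stopping time) construction that selects an antichain $F_k \subset J_k$ at each level $k$, defines nested compact sets $E_k = \bigcup_{w \in F_k} [w]$, and takes $E = \bigcap_k E_k$. The construction must simultaneously cap the local mass and preserve enough global mass.

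First I would set up the recursive pruning. Since $\mathcal{M}^s(J_\infty) = \infty$, Theorem \ref{t1} gives that $Z_k := \sum_{w \in J_k} \phi^s(T_w)$ grows without exponential decay. Choose a cap constant $c_1$ (two or three times a convenient quantity involving the contraction ratios from Proposition \ref{p2}(ii)). I would start with $F_0 = \{\emptyset\}$ and, given $F_{k-1}$, define a tentative $F_k$ as all one-letter extensions of elements of $F_{k-1}$. Then I prune: walk down through ancestors $u \in \bigcup_{j \leq k} F_j$ and, whenever
\[
\sum_{w \in F_k,\ w > u} \phi^s(T_w) > c_1 \phi^s(T_u),
\]
delete descendants in $F_k$ (chosen greedily by $\phi^s$-weight) until the inequality is reversed. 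This yields $F_k$ satisfying the local cap at every ancestor.

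Next I would verify the three required properties of $E = \bigcap_k \bigcup_{w \in F_k} [w]$. Compactness is immediate since each $E_k$ is a finite union of closed cylinders in the compact space $J_\infty$. For the upper bound, observe that for any $\omega \in J$, the family $\{[v] : v \in F_k,\ v > \omega\}$ forms a legitimate $r$-cover of $E \cap [\omega]$ (for any $r < \min_{v \in F_k} |v|$), so
\[
\mathcal{M}^s_r(E \cap [\omega]) \leq \sum_{v \in F_k,\ v > \omega} \phi^s(T_v) \leq c_1 \phi^s(T_\omega),
\]
by construction, yielding $\mathcal{M}^s(E \cap [\omega]) \leq c_1 \phi^s(T_\omega)$. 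Taking $\omega = \emptyset$ gives $\mathcal{M}^s(E) \leq c_1 < \infty$.

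The main obstacle is the positivity $\mathcal{M}^s(E) > 0$. The risk is that aggressive pruning could force $E = \emptyset$. To handle this, I would account bookkeepingly for the mass lost at each pruning step: each time we prune at an ancestor $u$, the amount removed is bounded by $\sum_{v \in F_k,\ v > u} \phi^s(T_v)$ minus $c_1 \phi^s(T_u)$, i.e., by the current excess. Summing over all pruning events and using submultiplicativity $\phi^s(T_{uv}) \leq \phi^s(T_u)\phi^s(T_v)$ together with Proposition \ref{p2}(ii) (so that $\phi^s(T_w)$ decays at a controlled geometric rate), a convergent geometric series argument shows that the total mass retained, $\lim_k \sum_{w \in F_k} \phi^s(T_w)$, remains bounded below by a positive constant provided $c_1$ is chosen large enough relative to the initial excess $Z_{k_0}$ for some large $k_0$ (using $\mathcal{M}^s(J_\infty) = \infty$ to guarantee the seed is large). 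Finally, distributing mass proportionally to $\phi^s(T_w)$ on $F_k$ and taking the weak-$*$ limit gives a Frostman-type measure $\mu$ supported on $E$ with $\mu([\omega]) \leq c_1\phi^s(T_\omega)$; the mass distribution principle (adapted to $\mathcal{M}^s$) then gives $\mathcal{M}^s(E) \geq \mu(E) > 0$.
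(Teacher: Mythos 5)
The paper does not actually prove this lemma; it invokes it as a known result ("see [9, Lemma 4.2]"), namely Falconer's modification of Rogers' theorem on compact subsets of finite positive net measure. So there is no "paper proof" to compare against, and your sketch has to be judged on its own merits.

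Your overall skeleton is the right one for this class of results: build a nested sequence of finite antichains $F_k$ subject to a local cap $\sum_{v\in F_k,\,v>u}\phi^s(T_v)\le c_1\phi^s(T_u)$, set $E=\bigcap_k\bigcup_{w\in F_k}[w]$, read off the Frostman bound $\mathcal{M}^s(E\cap[\omega])\le c_1\phi^s(T_\omega)$ from the cap, and obtain positivity via a weak-$*$ limit measure and a mass distribution principle adapted to the net measure. Compactness and the upper bound are fine modulo minor bookkeeping (you must take $k>|\omega|$ when covering $E\cap[\omega]$, and you should require $F_{k+1}$ to consist of descendants of $F_k$ so the cap really does propagate to every ancestor).

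The serious gap is the positivity $\mathcal{M}^s(E)>0$, and the "convergent geometric series" argument does not close it. Two independent mechanisms can drive $m_k:=\sum_{w\in F_k}\phi^s(T_w)$ to zero, and your bookkeeping controls neither. First, $\phi^s$ is only \emph{sub}multiplicative: $\phi^s(T_{wi})\le\phi^s(T_w)\phi^s(T_i)$ is one-sided and Proposition \ref{p2}(ii) gives only $\phi^s(T_w)\ge b^{s|w|}$, so passing from $F_k$ to its one-letter extension can shrink the mass by an arbitrary factor before any pruning happens; the "loss" you are tallying is only the pruning undershoot and misses this intrinsic decay. Second, and more fundamentally, greedy pruning by $\phi^s$-weight uses the wrong information. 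The weight $\phi^s(T_v)$ of a word is not correlated with the net measure $\mathcal{M}^s([v])$ of the cylinder below it: it is perfectly consistent with $\mathcal{M}^s(J_\infty)=\infty$ that all of the divergence of $\sum_{w\in J_k}\phi^s(T_w)$ comes from cylinders whose parents you discard (or whose parents collapse a few levels below), while the words you retain sit atop subtrees of zero net measure. The hypothesis is a global statement about covers of the whole tree and does not localize to a pruned subtree; once you remove a cylinder you have no right to use $\mathcal{M}^s(J_\infty)=\infty$ to say anything about what remains. This is exactly why the Rogers--Falconer construction is delicate: it selects which cylinders to keep using the truncated contents $\mathcal{M}^s_r$ (so the retained cylinders demonstrably have "room to grow"), and establishes positivity by an increasing-sets argument, not by bounding per-level loss by a geometric series. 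Without that ingredient, your argument has a genuine hole at its central step.
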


\section{ calculation of dimension}
\textit{Proof Theorem \ref{thm1} (i)}. 
Let $d(T_1,\cdots,T_M)<s<n$, $s\notin\mathbb{Z}$, and $m$ be the smallest integer greater than or equal to $s$. Then, by the difintion of $d(T_1,\cdots,T_M)$, we have $$\lim_{r\rightarrow +\infty}\frac{1}{r}\log(\sum_{w\in J_r}\phi^{s}(T_w))<0,$$ i.e. there exists $k$ s.t. $$\sum_{w\in J_k}\phi^{s}(T_w)\leq 1.$$ Fix $\epsilon>0$, and for any $w_0=(i_1,i_2,\cdots)\in J_{\infty}$, let $$w=(i_1,\cdots,i_{pk})<w_0.$$ We choose $p$($p>0$) to be the smallest number such that $\epsilon\geq\alpha_m>b^k\epsilon$, where $\alpha_m$ is the $m$-th singular value of $T_{w}$ and $b$ is defined in Propostion 4.1. Using subadditivity repeatedly, we have $$\sum_{w\in A}\phi^{s}(T_{w})\leq 1,$$ where $A=\{w:w_0\in J_{\infty}\}$ is a cover set of $J_{\infty}$, i.e. $J_{\infty}\subset\bigcup_{w\in A}[w]$. 
Notice that there exists large enough $R>1$ such that $S_i(B)\subset B$, for any $1\leq i\leq M,$ where $B=\{x\in\mathbb{F}^n:\|x\|\leq R\}$. Thus, we have $$K(\textbf{b})=\bigcup_{w\in J_{\infty}}S_w(B)\subset\bigcup_{w\in A}S_w(B).$$ 
Fix a $w\in A$, by $S_w(B)=T_w(B)+c$, where $c=(c_1,\cdots,c_n)\in \mathbb{F}^n$, we have $$S_w(B)=\{x\in\mathbb{F}^n:\|x_i-c_i\|\leq \alpha_i R,~1\leq i\leq n\},$$ where $\alpha_i$ is the $i$-singular value of $T_w$.
For any $1\leq i<m$, let $$E_i=\{x\in\mathbb{F}^n:\|x-c_i\|\leq \alpha_i R\}=c_i+\sigma_i\cdot y\cdot O,$$ where $\sigma_i,~y\in\mathbb{F}$, s.t. $\|y\|=R$ and $\|\sigma_i\|=\alpha_i$(this $y$ can be found, otherwise we can make $R$ larger).
Notice, $$O=\sum_{d_1\in R}(d_1+\pi O)=\sum_{d_1,d_2\in R}(d_1+\pi d_2+\pi^2O)=\cdots,$$ where $\sum$ means disjiont union of set and $R$ is a finite set of representatives for $O/P$ with ${\rm card}(R)=q=\frac{1}{\|\pi\|}$.
Thus $$E_i=\sum_{d_1\in R}(c_i+\sigma_iyd_1+\sigma_iy\pi O)=\sum_{d_1,d_2\in R}(c_i+\sigma_iyd_1+\sigma_iy\pi d_2+\sigma_iy\pi^2 O)=\cdots~(i<m).$$
That is, $E_i$ can be covered by $q^t$ nonintersecting "balls" of diameter $\frac{\alpha_iR}{q^t}$, i.e. an interval with length $\frac{\alpha_iR}{q^t}$.
Then, for any $1\leq i<m$, exists $t\in\mathbb{N}$ s.t. $$q^{t-1}<\frac{\alpha_i}{\alpha_m}\leq q^t,$$ then $$\frac{\alpha_iR}{q^t}\leq\alpha_mR<\frac{\alpha_iR}{q^{t-1}}.$$ 
So, $E_i$ can be covered by $\frac{\alpha_iq}{\alpha_m}$ intervals of diameter $\alpha_mR$. Moreover, $S_w(B)$ can be covered by $$\frac{\alpha_1\cdots\alpha_{m-1}q^{m-1}}{\alpha_m^{m-1}}$$ cubes with side length of $\alpha_mR$.
Then we have $$N_{\alpha_mR}(K(\textbf{b}))\leq\sum_{w\in A}\phi^s(T_{w})\alpha_m^{-s}q^{m-1}\leq b^{-ks}\epsilon^{-s}q^{m-1},$$ where $N_{\delta}(A)$ is the number of cubes of side $\delta$ in a cubical lattice that cover the set $A$. Thus, we have $$\overline{\dim}_BK(\textbf{b})\leq\overline{\lim}_{\epsilon\rightarrow0}\frac{\log N_{\alpha_mR}(K(\textbf{b}))}{-\log\alpha_mR}\leq \overline{\lim}_{\epsilon\rightarrow0}(c+s\log\epsilon)/\log\epsilon\leq s,$$
since $\epsilon\geq\alpha_m>b^k\epsilon$. It follows that $\dim_HK(\textbf{b})\leq \overline{\dim}_BK(\textbf{b})\leq d(T_1, T_2,\cdots,T_M)$ for all $\textbf{b}\in \mathbb{F}^{nM}$.\hfill$\Box$\\

Before proving Theorem 1.1 (ii), we need the following four lemmas.
\begin{lemma}\label{lemma3}
Let $r>0$ and exists $\alpha\in\mathbb{F}$, such that $\|\alpha\|=r$. Then we have the following estimates.
$$\int_{B^n_r}\|z\|^{-s}d\mu^n(z)=O(r^{n-s}), \mbox{ as } r\rightarrow 0, \mbox{ for } 0<s<n,$$
and
$$\int_{\mathbb{F}^n\setminus B^n_{r}}\|z\|^{-s}d\mu^n(z)=O(r^{n-s}), \mbox{ as }r\rightarrow 0, \mbox{ for } s>n,$$
where $B^n_r=\{x\in\mathbb{F}^n:\|x\|\leq r\}$.
\end{lemma}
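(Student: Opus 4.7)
The plan is to exploit the discreteness of the norm on $\mathbb{F}^n$. Since $\|z\|$ only takes values in $\{q^{-k}:k\in\mathbb{Z}\}\cup\{0\}$, the integrand $\|z\|^{-s}$ is constant on each annulus $A_k=\{z\in\mathbb{F}^n:\|z\|=q^{-k}\}$, so each integral reduces cleanly to a geometric series. The hypothesis that some $\alpha\in\mathbb{F}$ has $\|\alpha\|=r$ is exactly the statement that $r=q^{-N}$ for some $N\in\mathbb{Z}$, which lets us truncate the sums at an integer index.

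First I would compute the measure of each annulus. Using Proposition 2.1 and the observation that $B^n_{q^{-k}}=\pi^k\cdot O^n$ (or more generally any scalar of norm $q^{-k}$ times $O^n$), we have $\mu^n(B^n_{q^{-k}})=q^{-kn}$, and because of the discrete value group the nested decomposition $A_k=B^n_{q^{-k}}\setminus B^n_{q^{-k-1}}$ gives
\[
\mu^n(A_k)=q^{-kn}-q^{-(k+1)n}=(1-q^{-n})\,q^{-kn}.
\]
With $r=q^{-N}$, the ball integral becomes
\[
\int_{B^n_r}\|z\|^{-s}d\mu^n(z)=\sum_{k=N}^{\infty}q^{ks}\mu^n(A_k)=(1-q^{-n})\sum_{k=N}^{\infty}q^{-k(n-s)},
\]
which for $0<s<n$ is a convergent geometric series equal to $\dfrac{(1-q^{-n})\,q^{-N(n-s)}}{1-q^{-(n-s)}}=C_1\cdot r^{n-s}$. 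Likewise,
\[
\int_{\mathbb{F}^n\setminus B^n_r}\|z\|^{-s}d\mu^n(z)=(1-q^{-n})\sum_{k=-\infty}^{N-1}q^{-k(n-s)},
\]
and for $s>n$ we have $-k(n-s)=k(s-n)\to-\infty$ as $k\to-\infty$, so the series converges; reindexing by $m=N-1-k$ gives the closed form $C_2\cdot q^{(N-1)(s-n)}=C_2'\cdot r^{n-s}$.

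I do not expect any real obstacle here: the non-Archimedean structure replaces the delicate shell integration of the Euclidean case by a finite-measure computation on each clopen annulus, and the discreteness forces $r$ to lie in the value group so no passage to general $r$ is required. The only minor care is to note that the singleton $\{0\}$ has measure zero, so the fact that $\|z\|^{-s}$ is undefined there is immaterial, and to track the constants $C_1,C_2$ well enough to confirm they depend only on $q,n,s$ (not on $r$), which is immediate from the geometric-series formulas.
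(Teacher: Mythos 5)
Your proof is correct and follows essentially the same route as the paper: partition into the clopen annuli $A_k=\{\|z\|=q^{-k}\}$, use $\mu^n(A_k)=(1-q^{-n})q^{-kn}$, and sum the resulting geometric series. The only cosmetic difference is that you use the hypothesis $r=q^{-N}$ to obtain exact equalities with the constant $q^{ks}$ on each annulus, while the paper bounds $\|z\|^{-s}\leq q^{(i+1)s}$ and picks $l$ with $q^{-(l+1)}\leq r<q^{-l}$, settling for an inequality — which is all the $O(r^{n-s})$ statement requires.
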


\begin{proof}
	Recall, $\pi O=P$, $\|\pi\|=q^{-1}$, then there exists $l\in\mathbb{Z}$  s.t. $q^{-(l+1)}\leq r<q^{-l}$. Let $0<s<n$, then
	\begin{eqnarray*}
		\int_{B^n_{r}}\|z\|^{-s}d\mu^n(z)&\leq&\sum_{i=l}^{+\infty}\int_{B^n_{q^{-i}}\setminus B^n_{q^{-i-1}}}\|z\|^{-s}d\mu^n(z)\\&=&\sum_{i=l}^{+\infty}q^{(i+1)s}(q^{-in}-q^{-(i+1)n})\\&=&\frac{q^{(s-n)l}}{1-q^{s-n}}q^s(1-q^{-n})\leq C\cdot r^{n-s}. 
	\end{eqnarray*}
	When $s>n$, similarly, the following formula can be obtained
	\begin{eqnarray*}
		\int_{\mathbb{F}^n\setminus B^n_{r}}\|z\|^{-s}d\mu^n(z)&=&O(r^{n-s}). 
	\end{eqnarray*}
	Thus, we got the proof.
\end{proof}

\begin{lemma}\label{prop7}
Let $0<s<n$ be a non-integral real number. Then there exists a constant $c$ depending only on $n$, $s$ and $r$ such that
$$\int_{B^n_r}\frac{d\mu^n(x)}{\|Tx\|^s}\leq \frac{c}{\phi^s(T)}$$ for all non-singular $T\in \mathcal{L}(\mathbb{F}^n,\mathbb{F}^n).$
\end{lemma}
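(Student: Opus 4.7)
The plan is to combine the singular value decomposition with a two-stage change of variables and then to estimate the resulting integral on an axis-aligned brick by a direct layer-cake computation.

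First, by Theorem 3.2 write $T=PDQ$ with $P,Q\in I_n$ and $D=\mathrm{diag}(\sigma_1,\ldots,\sigma_n)$, $\|\sigma_i\|=\alpha_i$. The substitution $y=Qx$ is a $\mu^n$-preserving bijection of $B^n_r$ onto itself by Proposition 2.1 and Proposition 3.3, and since $P\in I_n$ one has $\|Tx\|=\|Dy\|$. A further substitution $z_i=\sigma_i y_i$ scales the Haar measure by $\alpha_1\cdots\alpha_n$ (Proposition 2.1), gives $\|Dy\|=\|z\|$, and carries $B^n_r$ onto the brick $\Pi:=\prod_{i=1}^n B^1_{R_i}$ with $R_i:=\alpha_i r$. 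Consequently
\[
\int_{B^n_r}\|Tx\|^{-s}\,d\mu^n(x)\;=\;\frac{1}{\alpha_1\cdots\alpha_n}\int_\Pi\|z\|^{-s}\,d\mu^n(z).
\]

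Next, since $R_1\geq\cdots\geq R_n$ and $\mu^n(\{z\in\Pi:\|z\|\leq t\})=\prod_i\min(R_i,t)$, the layer-cake identity gives
\[
\int_\Pi\|z\|^{-s}\,d\mu^n(z)\;=\;s\int_0^{\infty}t^{-s-1}\prod_{i=1}^n\min(R_i,t)\,dt.
\]
Partition $(0,\infty)$ by the breakpoints $R_n<\cdots<R_1$, with the conventions $R_0=\infty$ and $R_{n+1}=0$; on the $k$-th sub-interval $(R_{k+1},R_k)$ the integrand equals $t^{k-s-1}\prod_{i>k}R_i$, and the hypothesis $s\notin\mathbb{Z}$ forces $k-s\neq 0$ for every integer $k\in\{0,\ldots,n\}$, so each piece integrates to a constant multiple of $\prod_{i>k}R_i\cdot\bigl(R_k^{k-s}-R_{k+1}^{k-s}\bigr)/(k-s)$.

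The final step is bookkeeping. After re-substituting $R_i=\alpha_i r$ and dividing by $\alpha_1\cdots\alpha_n$, each piece factors as $r^{n-s}$ times a monomial in the $\alpha_i$'s. Splitting into the cases $k\geq m=\lceil s\rceil$ (where $k-s>0$) and $k\leq m-1$ (where $k-s<0$), in each case one uses the singular-value ordering together with the elementary inequalities $\prod_{i=j}^k\alpha_i\geq\alpha_k^{k-j+1}$ and $\prod_{i=j}^k\alpha_i\leq\alpha_j^{k-j+1}$ to redistribute powers and show that each monomial is dominated by $1/\phi^s(T)=1/(\alpha_1\cdots\alpha_{m-1}\alpha_m^{s-m+1})$ up to a factor depending only on $n$ and $s$. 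Summing the $n+1$ pieces yields the claimed bound with $c=c(n,s,r)$. The main obstacle is precisely this monomial matching near the critical index $k=m$, which goes through cleanly only because $s\notin\mathbb{Z}$ prevents a logarithmic divergence in the pieces with $k\in\{m-1,m\}$.
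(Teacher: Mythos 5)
Your reduction to the brick integral---via the singular value decomposition $T=PDQ$, the isometry $y=Qx$ which preserves $B^n_r$ and the Haar measure, and the scaling $z_i=\sigma_i y_i$---is exactly the same as the paper's first step (the paper additionally normalizes by a scalar $c$ with $\|c\|=r^{-1}$ so that the resulting brick is $B_{\alpha_1}\times\cdots\times B_{\alpha_n}$, but this is cosmetic). Where the two proofs genuinely diverge is the estimate of the brick integral. The paper partitions the brick into the two regions $P_1=\{\max_{i\le m}\|z_i\|\le\alpha_m\}$ and $P_2=\{\max_{i\le m-1}\|z_i\|>\alpha_m\}$, integrates out the inactive coordinates, and invokes its one-variable estimate (Lemma 5.1, with the $0<s<n$ branch for $P_1$ and the $s>n$ branch for $P_2$) to each piece; this cleanly produces the two summands $\alpha_{m+1}\cdots\alpha_n\alpha_m^{m-s}$ and $\alpha_m\cdots\alpha_n\alpha_m^{m-1-s}$, each of which equals $\alpha_1\cdots\alpha_n/\phi^s(T)$ after dividing by the Jacobian. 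You instead compute the distribution function $\mu^n(\{z\in\Pi:\|z\|\le t\})\le\prod_i\min(R_i,t)$ (note this is an inequality for non-lattice $t$, not an equality, but that is harmless for the upper bound), apply the layer-cake formula, integrate explicitly on each of the $n+1$ pieces, and then argue term by term that every resulting monomial $R_j^{j-s}/(R_1\cdots R_j)$ is dominated by $R_m^{m-s}/(R_1\cdots R_m)=\phi^s(T)^{-1}\cdot r^{-s}\cdot\alpha_1\cdots\alpha_n$. I checked that this monomial comparison does go through: for $j\le m$ one uses $R_{j+1}\cdots R_{m-1}\le R_j^{m-1-j}$ to get ratio $\le(R_j/R_m)^{m-1-s}\le1$ since $m-1-s<0$, and for $j>m$ one uses $R_{m+1}\cdots R_j\ge R_j^{j-m}$ to get ratio $\le(R_j/R_m)^{m-s}\le1$ since $m-s>0$. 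Both routes exploit $s\notin\mathbb{Z}$ in exactly the same place. Your approach is more self-contained (it does not require Lemma 5.1), at the cost of the case analysis you deferred to ``bookkeeping''; the paper's two-region split is slicker precisely because it packages the one-dimensional geometric-series estimate once and applies it twice.
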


\textit{Proof.} Let $T=PDQ$ be a
singular value decomposition of $T$, where
$D=\mbox{diag}(\sigma_1,\sigma_2,\cdots,\sigma_n)$, with
$\alpha_1\geq \alpha_2\geq\cdots \geq\alpha_n>0$  the associated
singular values.

Write $$I=\int_{B^n_r}\frac{d\mu^n(x)}{\|Tx\|^s},$$ then we have
\begin{eqnarray*}
I=\int_{B^n_r}\frac{d\mu^n(x)}{\|DQx\|^s}.
\end{eqnarray*}
Letting $y=Qx$, then $d\mu^n(y)=\|\det Q\|d\mu^n(x)=d\mu^n(x)$ and $\|y\|=\|Qx\|\leq r$, so we obtain
$$I=\int_{B^n_{r}}\frac{d\mu^n(y)}{\|Dy\|^s}=\int_{B^n_{r}}\frac{d\mu^n(y)}{\max_{1\leq i\leq n}\{\|\sigma_iy_i\|^s\}}.$$
Let $z_i=\sigma_iy_id$, where $\|c\|=r^{-1}$, and $c\in\mathbb{F}$, then
$$I=r^n(\alpha_1\cdots\alpha_n)^{-1}\int_{B_{\alpha_1}\times\cdots\times B_{\alpha_n}}\frac{d\mu(z_1)\cdots d\mu(z_n)}{\max_{1\leq i\leq n}\{\|d\mu(z_i)\|^s\}}.$$
Let $m=\lceil s\rceil$. Writing
$$P_1=\{z\in B_{\alpha_1}\times\cdots\times B_{\alpha_n}: \max\{\|z_1\|,\cdots,\|z_m\|\}\leq \alpha_m\}$$
and
$$P_2=\{z\in B_{\alpha_1}\times\cdots\times B_{\alpha_n}: \max\{\|z_1\|,\cdots,\|z_{m-1}\|\}>\alpha_m\},$$
we have $B_{\alpha_1}\times\cdots\times B_{\alpha_n}\subset P_1\cup P_2,$ since $\|z_m\|\leq \alpha_m.$
Let $$I_0=\int_{B_{\alpha_1}\times\cdots\times B_{\alpha_n}}\frac{d\mu(z_1)\cdots d\mu(z_n)}{\max_{1\leq i\leq n}\{\|z_i\|^s\}}.$$ Then
\begin{eqnarray*}
I_0&\leq &\int_{P_1}\frac{d\mu(z_1)\cdots d\mu(z_n)}{\max_{1\leq i\leq m}\{\|z_i\|^s\}}+\int_{P_2}\frac{d\mu(z_1)\cdots d\mu(z_n)}{\max_{1\leq i\leq m-1}\{\|z_i\|^s\}}
\\&\leq& \alpha_{m+1}\cdots\alpha_n\int_{\max\{\|z_1\|,\cdots,\|z_m\|\}\leq \alpha_m}\frac{d\mu(z_1)\cdots d\mu(z_m)}{\max_{1\leq i\leq m}\{\|z_i\|^s\}}\\&&+\alpha_{m}\cdots\alpha_n\int_{\max\{\|z_1\|,\cdots,\|z_{m-1}\|\}>\alpha_m}\frac{d\mu(z_1)\cdots d\mu(z_{m-1})}{\max_{1\leq i\leq m-1}\{\|z_i\|^s\}}\\
&=&\alpha_{m+1}\cdots\alpha_n\int_{B_{\alpha_m}^m}\frac{d\mu(z_1)\cdots d\mu(z_m)}{\|z^{(m)}\|^s}+\\&&\alpha_m\cdots\alpha_n\int_{F^{m-1}\setminus B_{\alpha_m}^{m-1}}\frac{d\mu(z_1)\cdots d\mu(z_{m-1})}{\|z^{(m-1)}\|^s}.
\end{eqnarray*}

By Lemma $\ref{lemma3}$, we get
$$I_0\leq c_1\alpha_{m+1}\cdots\alpha_n \alpha^{(m-s)}_m+c_2\alpha_m\cdots\alpha_n \alpha^{(m-1-s)}_m$$
for appropriate constants $c_1$, $c_2$ depending only on $n$, $s$ and $r$. Hence, $$I\leq (c_1+c_2)r^n\frac{1}{\alpha_1\cdots\alpha_{m-1}\alpha_m^{s-m+1}}=\frac{c}{\phi^s(T)}.$$
The proof is finished.\hfill$\Box$\\

\begin{lemma}\label{lemma4}
If $s\notin\mathbb{Z}$ and $0<s<n$ and $\|T_i\|<1$, for any $1\leq i\leq M$, then there is a number $0<c<+\infty$ such that
$$\int_{\textbf{b}\in B_{r}^{nM}}\frac{d\mu^{nM}(\textbf{b})}{\|x_\omega(\textbf{b})-x_{\omega'}(\textbf{b})\|^s}\leq\frac{c}{\phi^s(T_{\omega\wedge\omega'})}$$
for all distinct $\omega, \omega'\in J_{\infty}.$
\end{lemma}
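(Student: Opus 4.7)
The plan is to reduce the $nM$-dimensional integral to a one-variable integral in a single coordinate $b_i$, which can then be estimated by Lemma~5.2. First, let $w = \omega \wedge \omega'$ and decompose $\omega = w\eta$, $\omega' = w\eta'$, where by maximality $\eta_1 = i \neq j = \eta'_1$. Telescoping the defining series for $x_\omega(\textbf{b})$ yields
$$x_\omega(\textbf{b}) - x_{\omega'}(\textbf{b}) = T_w\bigl(x_\eta(\textbf{b}) - x_{\eta'}(\textbf{b})\bigr) =: T_w Y(\textbf{b}),$$
with $T_w = T_{\omega\wedge\omega'}$, so the integrand becomes $\|T_w Y(\textbf{b})\|^{-s}$.

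Next I would isolate the $b_i$-dependence of $Y$. The $k=1$ term of $x_\eta(\textbf{b})$ contributes $I\cdot b_i$ (since $\eta_1 = i$), whereas $x_{\eta'}(\textbf{b})$ contributes no identity term in $b_i$ (since $\eta'_1 \neq i$). Writing $Y(\textbf{b}) = L\, b_i + R$ with $R$ independent of $b_i$, one has
$$L = I + \sum_{\substack{k\geq 2\\ \eta_k=i}} T_{\eta_1\cdots\eta_{k-1}} - \sum_{\substack{k\geq 2\\ \eta'_k=i}} T_{\eta'_1\cdots\eta'_{k-1}}.$$
Every summand of $L - I$ is a product of at least one contractive $T_l$ and hence has norm strictly less than $1$; the strong triangle inequality gives $\|L - I\| < 1$. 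The ultrametric calculation $\|Lx\| = \|x + (L-I)x\| = \|x\|$ for all $x \neq 0$ then shows via Proposition~3.3 that $L \in I_n$, i.e., $L$ is an isometry with $\|\det L\| = 1$.

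Third, fix the other coordinates $\{b_k : k \neq i\}$ and substitute $u = L b_i + R$. Since $L$ is isometric, $d\mu^n(u) = \|\det L\|\,d\mu^n(b_i) = d\mu^n(b_i)$ and $L(B^n_r) = B^n_r$. Moreover, since $\|b_k\| \leq r$ for each $k$ and every $T_{\cdots}$ has operator norm $\leq 1$, the strong triangle inequality gives $\|x_\eta(\textbf{b})\|, \|x_{\eta'}(\textbf{b})\| \leq r$, hence $\|Y\| \leq r$; combined with $\|Lb_i\| = \|b_i\| \leq r$, this yields $\|R\| \leq r$. By the ultrametric property, $B^n_r + R = B^n_r$. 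Applying Fubini and Lemma~5.2 then gives
$$\int_{B^{nM}_r}\frac{d\mu^{nM}(\textbf{b})}{\|T_w Y(\textbf{b})\|^s} = \mu(B^n_r)^{M-1}\int_{B^n_r}\frac{d\mu^n(u)}{\|T_w u\|^s} \leq \mu(B^n_r)^{M-1}\cdot\frac{c_0}{\phi^s(T_w)} = \frac{c}{\phi^s(T_{\omega\wedge\omega'})}.$$

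The main obstacle I anticipate is verifying the isometry property of $L$: it rests on the non-Archimedean phenomenon ``$\|F\| < 1 \Rightarrow I + F$ is norm-preserving'', which has no counterpart in the real case and is precisely why the $\|T_i\| < \tfrac12$ hypothesis of Falconer's theorem can be dropped here. A secondary bookkeeping point is ensuring that no spurious identity contribution enters from $\eta'$—this is exactly what forces the hypothesis $i \neq j$ (the maximality of $w = \omega \wedge \omega'$) into play.
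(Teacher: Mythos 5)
Your proof is correct and follows essentially the same approach as the paper: both reduce to $x_\nu(\textbf{b}) - x_{\nu'}(\textbf{b})$ via the factorization $x_\omega - x_{\omega'} = T_{\omega\wedge\omega'}(x_\nu - x_{\nu'})$, isolate the identity contribution in a single coordinate $b_i$, observe that the resulting $I + (\text{perturbation})$ is an isometry in $I_n$ because every perturbing term has norm $<1$ in the ultrametric, and then change variables and invoke Lemma~\ref{prop7}. The paper's published argument is terse --- it only verifies $\|E\|<1$ and defers the substitution to ``Falconer's proof goes through'' --- whereas you carry out the Fubini/change-of-variables step explicitly, making the key non-Archimedean mechanism (namely $\|F\|<1 \Rightarrow I+F$ norm-preserving, replacing Falconer's $\|T_i\|<\tfrac12$ Jacobian bound) fully transparent.
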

\begin{proof}
	This is non-Archimedean case of Lemma 3.1 from Falconer \cite{4}. Write $\omega=(\omega\wedge\omega')\nu$ and $\omega'=(\omega\wedge\omega')\nu'$, where $\nu, \nu'\in J_{\infty}.$ Let $q=|\omega\wedge\omega'|$, $$\eta=\max_{1\leq i\leq M}\|T_i\|<1.$$ Notice, $\nu_1\neq\nu'_1$, without loss of generality we may assume that $\nu_1=1$ and $\nu'_1=2.$ Then
	\begin{eqnarray*}
		x_{\nu}(\textbf{b})-x_{\nu'}(\textbf{b})&=&b_1-b_2+(T_{\omega_{q+1}}b_{\omega_{q+2}}+T_{\omega_{q+1}}T_{\omega_{q+2}}b_{\omega_{q+3}}+\cdots)\\
		&&-(T_{\omega'_{q+1}}b_{\omega'_{q+2}}+T_{\omega'_{q+1}}T_{\omega'_{q+2}}b_{\omega'_{q+3}}+\cdots)
		\\&=&b_1-b_2+E(\textbf{b}),
	\end{eqnarray*}
	where $E\in \mathcal{L}(\mathbb{F}^{nM}, \mathbb{F}^n)$, Falconer's proof goes through if $\|E\|<1$. Actually, by non-Archimedean properties of norm, we have $\|E\|=\sup\frac{\|E(\textbf{b})\|}{\|\textbf{b}\|}\leq \sup\frac{\eta\|\textbf{b}\|}{\|\textbf{b}\|}=\eta<1.$
\end{proof}

\begin{lemma}\label{lemma6}
	Let $h$ be a Borel measure on $J_{\infty}$, with $0<h(J_{\infty})<\infty$. If there exists $s<n$ s.t. $$\int_{J_{\infty}}\int_{J_{\infty}}\int_{\textbf{b}\in B_{r}^{nM}}\frac{d\mu^{nM}(\textbf{b})dh(\omega)dh(\omega')}{\|x_\omega(\textbf{b})-x_{\omega'}(\textbf{b})\|^s}\leq\infty,$$ then, for $\mu^{nM}$-a.e. $\textbf{b}\in B^{nM}_r$, $\dim_HK(\textbf{b})\geq s$.
\end{lemma}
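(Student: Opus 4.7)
The plan is the classical potential-theoretic route, adapted to the ultrametric setting. The chain is: use Fubini to extract a full-measure set of parameters; push the measure $h$ forward onto $K(\textbf{b})$ via the coding map; and then invoke the metric-space fact that a compact set supporting a nontrivial finite Borel measure of finite $s$-energy has Hausdorff dimension at least $s$.

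First, since the integrand is nonnegative, Tonelli allows the $\mu^{nM}$-integration to be moved outside in the hypothesis, so that for $\mu^{nM}$-a.e.\ $\textbf{b}\in B_r^{nM}$ one has
\[
I(\textbf{b}) \;:=\; \int_{J_\infty}\!\!\int_{J_\infty}\frac{dh(\omega)\,dh(\omega')}{\|x_\omega(\textbf{b})-x_{\omega'}(\textbf{b})\|^s} \;<\; \infty.
\]
Fix such a $\textbf{b}$. Contractivity of the $T_i$ yields a geometric estimate of the form $\|x_\omega(\textbf{b})-x_{\omega'}(\textbf{b})\|\leq C\eta^{|\omega\wedge\omega'|}$ with $\eta=\max_i\|T_i\|<1$, which shows that the coding map $\pi_{\textbf{b}}\colon J_\infty \to K(\textbf{b})$, $\pi_{\textbf{b}}(\omega)=x_\omega(\textbf{b})$, is continuous on the compact metric space $J_\infty$. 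The push-forward $\nu := (\pi_{\textbf{b}})_* h$ is then a nontrivial finite Borel measure supported in $K(\textbf{b})$, and the usual change of variables transports $I(\textbf{b})$ into the $s$-energy of $\nu$:
\[
\int\!\!\int\frac{d\nu(x)\,d\nu(y)}{\|x-y\|^s} \;=\; I(\textbf{b}) \;<\; \infty.
\]

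The final step invokes the standard implication that in any metric space a compact set $E$ carrying a nontrivial finite Borel measure of finite $s$-energy has $\dim_H E \geq s$. The classical Frostman argument --- the $\nu$-a.e.\ finiteness of the upper $s$-density $\limsup_{r\to 0}\nu(B(x,r))/r^s$, combined with the covering definition of $\mathcal{H}^s$ --- uses only the metric and Borel structure, so it transfers verbatim to $(\mathbb{F}^n,d)$; applied to $\nu$ it produces $\dim_H K(\textbf{b})\geq s$. The main obstacle I anticipate is really only verifying that this last implication needs nothing beyond a metric. In the ultrametric setting it is even cleaner: balls are clopen, and any open cover can be refined into a disjoint one of no larger diameters, so the Frostman comparison between $\nu$-mass and $s$-content of covers is immediate.
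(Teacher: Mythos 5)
Your proposal is correct and is essentially the paper's own argument: the paper simply refers the reader to Falconer's potential-theoretic lemma (Lemma 5.2 in \cite{4}) and skips the details, and what you write out is exactly that argument — Tonelli to reduce to a.e.\ finite $s$-energy of the pushed-forward measure $\nu=(\pi_{\mathbf{b}})_*h$ on $K(\mathbf{b})$, followed by the standard ``finite $s$-energy implies $\dim_H\geq s$'' implication, which you correctly observe is purely metric and hence transfers to $(\mathbb{F}^n,d)$ without change.
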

\begin{proof}
	The proof of Lemma 5.4 is basically the same as the proof of the Lemma 5.2 in \cite{4}. So we skip the proof.
\end{proof}

\textit{Proof Theorem \ref{thm1} (ii)}. 
 First, we proof that for $\mu^{nM}$-a.e. $\textbf{b}\in B^{nM}_r$, $\dim_HK(\textbf{b})\geq \min\{n,d(T_1,\cdots,T_M).$ 
 Fix $R>0$, let $t\notin\mathbb{Z}$ such that $0<t<\min\{n,d(T_1,\cdots,T_M)\}$, and choose $s$ such that $t<s<\min\{n,d(T_1,\cdots,T_M)$. Then, we have  $\mathcal{M}^s(J_{\infty})=\infty$, by using Lemma \ref{la1}, there exists a compact set $E\subset J_{\infty}$ such that $0<\mathcal{M}^s(E)<\infty $, and exists a constant $c_1$ such that $$\mathcal{M}^s(E\cap [\omega])\leq c_1\phi^s(T_w) ~~(w\in J).$$ Define $\nu(A)=\mathcal{M}^s(E\cap A)$, for all $A\in J_{\infty}$. Then $\nu([w])\leq c_1\phi^s(T_w)$  for any $w\in J$, and $0<\nu(J_{\infty})=\mathcal{M}^s(E)<\infty$.
\begin{eqnarray*}
	\int_{J_{\infty}}\int_{J_{\infty}}\int_{\textbf{b}\in B_{r}^{nM}}\frac{d\mu^{nM}(\textbf{b})d\nu(\omega)d\nu(\omega')}{\|x_\omega(\textbf{b})-x_{\omega'}(\textbf{b})\|^s}
	&\leq&c\int_{J_{\infty}}\int_{J_{\infty}}\frac{d\nu(w)d\nu(w')}{\phi^t(T_{w\wedge w'})} (\text{ by Lemma }\ref{lemma4})\\
	(\text{let }\upsilon=w\wedge w')&=&c\sum_{\upsilon\in J}\sum_{1\leq i\neq j\leq M}\frac{\nu([\upsilon,i])\nu([\upsilon,j])}{\phi^t(T_{\upsilon})}\\
	(\text{by }\nu([\upsilon])=\sum_{1\leq i\leq M}\nu([\upsilon,i]))&\leq&c\sum_{\upsilon\in J}\frac{\nu([\upsilon])^2}{\phi^t(T_{\upsilon})}\\
	&\leq&c\cdot c_1\sum_{1\leq r}\sum_{\upsilon\in J_r}\frac{\phi^s(T_{\upsilon})\nu([\upsilon])}{\phi^t(T_{\upsilon})}\\
	&\leq&c\cdot c_1\sum_{1\leq r}\sum_{\upsilon\in J_r}a^{r(s-t)}\nu([\upsilon])\\
	&=&c\cdot c_1 \frac{a^{s-t}\nu(J_{\infty})}{1-a^{s-t}}<\infty.
\end{eqnarray*}
Where $a$ is defined in Proposion 4.1 with $a<1$. Using Lemma \ref{lemma6}, we have for $\mu^{nM}$-a.e. $\textbf{b}\in B^{nM}_R$, we have $\dim_HK(\textbf{b})\geq t$. Let $R\rightarrow\infty$, we have for $\mu^{nM}$-a.e. $\textbf{b}\in B^{nM}_r$, $\dim_HK(\textbf{b})\geq \min\{n,d(T_1,\cdots,T_M)$. \\
Then, if $\dim_HK(\textbf{b})=\min\{n,d(T_1,\cdots,T_M)\}$, since $\underline {\dim}_BK(\textbf{b})\geq\dim_HK(\textbf{b})$ and $\overline{\dim}_BK(\textbf{b})\leq d(T_1, T_2,\cdots,T_M)$ for all $\textbf{b}\in \mathbb{F}^{nM}$ which we proved before. It follows that $\dim_B K(\textbf{b})=\min\{n,d(T_1, T_2,\cdots,T_M)\}$ for $\mu^{nM}$-a.e. $\textbf{b}\in \mathbb{F}^{nM}.$\hfill$\Box$

\bigskip

\end{document}